\documentclass[11pt]{amsart}

\usepackage[margin=1in]{geometry}
\usepackage{graphicx}
\usepackage{mathtools}
\usepackage{amssymb,amsfonts,amsthm}
\usepackage{enumerate}
\usepackage{color}
\usepackage[usenames,dvipsnames,svgnames,table]{xcolor}
\usepackage{comment}
\usepackage{multirow}
\usepackage{tikz}
\usepackage{hyperref}
\usepackage{euscript}
\raggedbottom

\specialcomment{itsapicture}{}{}


\newcommand{\inv}{^{-1}}
\newcommand{\floor}[1]{\left\lfloor #1 \right\rfloor}

\newcommand{\eucal}{\EuScript}
\newcommand{\fr}{\mathfrak}

\newcommand{\ff}{\mathbf{F}}

\newcommand{\pp}{\mathbf{P}}
\newcommand{\qq}{\mathbf{Q}}

\newcommand{\zz}{\mathbf{Z}}
\newcommand{\ring}{{\eucal O}}

\newcommand{\disc}{\operatorname{disc}}

\newcommand{\ind}{\operatorname{ind}}
\newcommand{\gal}{\operatorname{Gal}}

\newcommand{\ord}{\operatorname{ord}}
\newcommand{\period}{\operatorname{per}}
\newcommand{\pper}{\operatorname{pper}}
\newcommand{\per}{\operatorname{per}}

\newcommand{\red}{\operatorname{red}}
\newcommand{\res}{\operatorname{res}}

\newcommand{\wt}{\operatorname{wt}}

\newtheorem{thm}{Theorem}[section]
\newtheorem{prop}[thm]{Proposition}

\newtheorem{lem}[thm]{Lemma}

\theoremstyle{definition}

\newtheorem{example}[thm]{Example}

\theoremstyle{remark}
\newtheorem{remark}[thm]{Remark}

\allowdisplaybreaks

\begin{document}
\title{Discriminants of simplest $3^n$-tic extensions}
\author[t. alden gassert]{T. Alden Gassert}
\email{thomas.gassert@colorado.edu}
\address{University of Colorado, Boulder\\
Campus Box 395\\
Boulder, CO, USA 80309-0395}

\date{\today}

\begin{abstract}
Let $\ell>2$ be a positive integer, $\zeta_\ell$ a primitive $\ell$-th root of unity, and $K$ a number field containing $\zeta_\ell+\zeta_\ell\inv$ but not $\zeta_\ell$. In a recent paper, Chonoles et. al. study iterated towers of number fields over $K$ generated by the generalized Rikuna polynomial, $r_n(x,t;\ell) \in K(t)[x]$. They note that when $K = \qq$, $t \in \{0,1\}$, and $\ell=3$, the only ramified prime in the resulting tower is 3, and they ask under what conditions is the number of ramified primes small. In this paper, we apply a theorem of Gu\`ardia, Montes, and Nart to derive a formula for the discriminant of $\qq(\theta)$ where $\theta$ is a root of $r_n(x,t;3)$, answering the question of Chonoles et. al. in the case $K = \qq$, $\ell=3$, and $t \in \zz$. In the latter half of the paper, we identify some cases where the dynamics of $r_n(x,t;\ell)$ over finite fields yields an explicit description of the decomposition of primes in these iterated extensions.
\end{abstract}

\maketitle

\section{Introduction}
Let $K$ be a number field, $\ring_K$ its ring of integers, and $\varphi$ a rational map of degree at least 2 with coefficients in $\ring_K$. Denote by $\varphi^n(x) = \varphi \circ \varphi^{n-1}(x)$ its $n$-fold composition, and for each iterate write $\varphi^n(x) = p_n(x)/q_n(x)$, where $p_n(x), q_n(x) \in \ring_K[x]$ share no common roots. For any $t \in \ring_K$, consider a sequence of preimages of $t$ under $\varphi$, say $(t=\theta_0, \theta_1,\theta_2, \ldots)$, which satisfies $\varphi(\theta_n) = \theta_{n-1}$. If the selection of $\varphi$ and $t$ are such that $p_n(x) - tq_n(x)$ is monic and irreducible for each $n \ge 1$, then by adjoining the preimages of $t$ to $K$, we obtain an infinite tower of fields
\begin{align*}
K = K_0\subset K_1 \subset K_2 \subset K_3 \cdots,
\end{align*}
where $K_n = K(\theta_n)$. These fields are known as \emph{iterated extensions}, and we use $\{K_n\} :=\{K_n;t\}_{n=0}^\infty$ to denote any tower of iterated extensions over $K$ specialized at $t$.

In recent years, these dynamically generated fields, particularly those coming from the iteration of postcritically finite maps, have been used to produce towers of number fields with interesting and unusual arithmetic qualities. (Recall that a map $\varphi$ is \emph{postcritically finite} if the forward orbit of each of its critical points is finite.) For one, any specialized tower generated by a postcritically finite map is unramified outside a finite set of primes \cite{ahm05,ch12}. Moreover, some families of postcritically finite maps---most notably the power maps, which generate Kummer extensions, the Chebyshev polynomials \cite[Proposition 5.6]{bgn03}, and Latt\`es maps \cite{jr10}---give rise to extensions whose associated Galois groups are relatively small. In general, if $K_{G,n}$ denotes the Galois closure of $K_n/K$, the \emph{iterated monodromy group} $\gal(K_{G,n}/K)$ is a subgroup of the automorphism group of a rooted tree. 

In a recent paper, Chonoles, Cullinan, Hausman, Pacelli, Pegado, and Wei \cite{cchppw14} define a new family of postcritically finite maps, the \emph{generalized Rikuna polynomials}, which also give rise to extensions with small Galois groups. These maps are named after a family of maps studied by Rikuna \cite{r02}, which are defined as follows.

Let $\ell>2$ be a positive integer, $K$ a field of characteristic coprime to $\ell$, and $\ring_K$ the ring of integers of $K$. Fix a primitive $\ell$-th root of unity $\zeta$ in a fixed algebraic closure $\overline K$ of $K$, and set $\zeta^+ = \zeta+\zeta\inv$. Assume further that $\zeta^+\in K$ but $\zeta \not\in K$. Define the polynomials $P(x;\ell)$ and $Q(x;\ell) \in K[x]$ by
\begin{align*}
P(x;\ell) = \frac{\zeta\inv(x-\zeta)^\ell - \zeta(x-\zeta\inv)^\ell}{\zeta\inv-\zeta}\quad \text{and} \quad Q(x;\ell) = \frac{(x-\zeta)^\ell - (x-\zeta\inv)^\ell}{\zeta\inv-\zeta}.
\end{align*}
Then the \emph{Rikuna polynomial} is
\begin{align*}
r(x,t;\ell) = P(x;\ell)-tQ(x;\ell) \in K(t)[x].
\end{align*}
This polynomial is \emph{generic} in that it parametrizes cyclic extensions of degree $\ell$ over $K$ that are not Kummer (since $\zeta\not\in K$). The case $\ell=3$ and $t=s/3$ yields Shanks' ``simplest cubic" polynomial.

Set $\varphi(x;\ell) =P(x;\ell)/Q(x;\ell)$, and define $P_n(x;\ell)$ and $Q_n(x;\ell)$ to be the polynomials that satisfy
\begin{align} \label{eq:phidef}
\varphi^n(x;\ell) = \frac{P_n(x;\ell)}{Q_n(x;\ell)},
\end{align}
where $\varphi^n$ denotes the $n$-fold composition of $\varphi$ (i.e., $\varphi^n = \varphi\circ\varphi^{n-1}$) expressed in lowest terms, and $P_n(x;\ell)$ is monic. Then the generalized Rikuna polynomial is
\begin{align} \label{eq:genrikdef}
r_n(x,t;\ell) = P_n(x;\ell)-tQ_n(x;\ell). 
\end{align}
Chonoles et al. note that when $K = \qq$, $\ell=3$, and $t\in \{0,1\}$, the tower of number fields coming from the iterates of the generalized Rikuna polynomial is ramified only at 3 (in fact, the tower is the cyclotomic $\zz_3$-extension of $\qq$), and they ask if there are any other specializations for which the number of ramified primes is small. In particular, it would be of great interest if the Rikuna polynomials could be used to produce extensions that are ramified only at $\ell$. Since these maps are postcritically finite, it is already known that any tower over $K$ will be finitely ramified. Indeed, using the discriminant formulas in \cite[Proposition 3.2 and Proposition 1, respectively]{ahm05,ch12}, one can show that the discriminant of $r_n(x,t;\ell)$ is
\begin{align} \label{eq:disc}
\disc r_n(x,t;\ell) &= \pm\ell^{n\ell^n}(\zeta-\zeta\inv)^{(\ell^n-2)(\ell^n-1)}(t^2-\zeta^+t+1)^{\ell^n-1}.
\end{align}
Moreover, there is a well known relationship between the $\ring_K$-ideals generated by $\disc r_n(x,t;\ell)$ and the relative discriminant of $K_n/K$:
\begin{align} \label{eq:disc relation}
(\disc r_n(x,t;\ell))\ring_K = [\ring_{K(\theta_n)} \colon \ring_K[\theta_n]]^2(\disc(K_n/K))\ring_K.
\end{align}
Thus one method for identifying the ramified primes in $K_n/K$ is to compute the ideal index $[\ring_{K(\theta_n)} \colon \ring_K[\theta_n]]$. The primary result of this paper is the computation of this index in the case $K = \qq$, $\ell=3$, and $t \in \zz$.

Let $v_p := \nu_p(t^2+t+1)$ denote the $p$-adic valuation of $t^2+t+1$, and let $\ind r_n(x,t;3) = [\ring_{\qq(\theta_n)} \colon \zz[\theta_n]]$.

\begin{thm} \label{th:maine index}
We have
\begin{align*}
\ind r_n(x,t;3) &= 3^{E/4}\prod_{p \mid t^2+t+1} p^{((3^n-1)(v_p-1) + \gcd(3^n,v_p)-1)/2}, \quad \text{where} \\
E &= 
\begin{cases*}
(3^n-1)^2+2V+2\sum_{k=0}^{V-1}3^{n-k} & if $t \equiv 1 \pmod 3$ \\
(3^n-1)(3^n-3) & otherwise,
\end{cases*}
\end{align*}
for a constant $V$ (depending on $n$) that we specify in Theorem \ref{th:ind3}. Moreover,
\begin{align*}
\disc K_n = 3^{n3^n-E/2}\prod_{p\mid t^2+t+1} p^{3^n-\gcd(3^n,v_p)}.
\end{align*}
\end{thm}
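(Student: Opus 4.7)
The plan is to compute the local index $\nu_p(\ind r_n(x,t;3))$ for each prime $p$ dividing $\disc r_n(x,t;3)$ via the Okutsu-Montes-Nart index theorem, and then recover $\disc K_n$ from the relation \eqref{eq:disc relation}. Substituting $\ell = 3$ into \eqref{eq:disc}---using $\zeta^+ = -1$ and $(\zeta - \zeta^{-1})^2 = -3$---shows that the only rational primes that can divide the index are $3$ and the primes dividing $t^2+t+1$, so the index calculation splits into two cases to be handled separately.

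For a prime $p \mid t^2+t+1$ with $p \neq 3$, one has $p \equiv 1 \pmod 3$, so $p$ splits in $\zz[\zeta_3]$ and the two roots of $t^2+t+1$ modulo $p$ give two complementary sequences of preimages of $\bar{t}$ under the reduced map $\bar\varphi(x;3)$. This produces a clean factorization of $r_n(x,t;3)$ modulo $p$, and attached to each component is a single $\phi$-Newton polygon of $r_n(x,t;3)$ with one edge whose slope is determined by $v_p$ and whose horizontal length divides $3^n$. Because no wild ramification occurs at $p$, the Montes algorithm terminates at the first order, and the index contribution is read off from the number of lattice points strictly under these polygons; the standard formula for lattice points under a segment of rational slope gives the exponent $\tfrac{1}{2}\bigl((3^n-1)(v_p-1) + \gcd(3^n, v_p) - 1\bigr)$, with the $\gcd$ term arising from the fractional part of the slope.

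The main obstacle is the prime $p = 3$, which is wildly ramified throughout the tower and requires iterating the Montes procedure to higher orders. The shape of the higher-order types depends on the congruence class of $t$ modulo $3$. When $t \not\equiv 1 \pmod 3$ one has $v_3(t^2+t+1) = 0$, the first-order Newton polygon of $r_n(x,t;3)$ has a single edge of rational slope, and the lattice-point count at this level already determines the index, yielding the clean exponent $E = (3^n-1)(3^n-3)$. When $t \equiv 1 \pmod 3$, however, $v_3(t^2+t+1) = 1$ and the polygon splits into several edges, forcing the algorithm to proceed through several orders. The number of orders needed is the integer $V$ introduced in Theorem \ref{th:ind3}; it records how many times the $3$-adic orbit of $t$ under $\varphi(x;3)$ must be refined before a non-degenerate type is reached. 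Each additional order contributes a new slice of lattice points, and summing these slices produces the expression $(3^n-1)^2 + 2V + 2\sum_{k=0}^{V-1} 3^{n-k}$ for $E$.

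Once $\nu_p(\ind r_n(x,t;3))$ is determined at every ramified prime, the formula for $\ind r_n(x,t;3)$ is assembled multiplicatively, and the formula for $\disc K_n$ follows prime by prime from $\nu_p(\disc K_n) = \nu_p(\disc r_n(x,t;3)) - 2\nu_p(\ind r_n(x,t;3))$ using \eqref{eq:disc} and \eqref{eq:disc relation}. The heart of the argument, and where the most work is required, is the recursive analysis at $p = 3$ in the case $t \equiv 1 \pmod 3$: one must inductively verify that the higher-order Newton polygons have the predicted shape and identify the stopping level $V$ explicitly. This is the content of the auxiliary Theorem \ref{th:ind3} that the proof invokes.
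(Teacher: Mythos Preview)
Your overall plan---compute $\ind_p$ prime by prime via Montes and then recover $\disc K_n$ from \eqref{eq:disc relation}---matches the paper. But the description of what actually happens at $p=3$ is wrong, and this is the heart of the argument.

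You assert that when $t\equiv 1\pmod 3$ the Montes algorithm must be iterated to higher orders, and that $V$ counts how many such orders are needed (``how many times the $3$-adic orbit of $t$ under $\varphi(x;3)$ must be refined before a non-degenerate type is reached''). In fact the paper shows that $r_n(x,t;3)$ is $3$-regular already at the \emph{first} order, so no higher-order types are ever invoked. One has $r_n(x,t)\equiv (x-1)^{3^n}\pmod 3$, and the Taylor coefficients $a_{n,m}$ of the $(x-1)$-development are computed explicitly as $a_{n,m}=\pm\binom{3^n}{m}3^{\lfloor(3^n-m)/2\rfloor}e_m(t)$ for certain linear forms $e_m(t)$. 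The first-order $(x-1)$-Newton polygon then has several \emph{sides} (not several orders), and the integer $V=\min\{\nu_3(a_{n,0})-\tfrac{3^n+1}{2},\,n\}=\min\{\nu_3(2t+1)-1,\,n\}$ simply records how many of those sides appear; each side has residual polynomial of the form $y^3\pm y\pm 1$ or $y^2\pm 1$, all separable over $\ff_3$, so regularity holds and the index is a pure lattice-point count via Pick's theorem. Your ``recursive analysis'' and ``stopping level'' picture does not reflect this, and as written would not produce the stated formula for $E$.

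There is also a smaller inaccuracy at $p\neq 3$: the reduction is $r_n(x,t)\equiv (x-t)^{3^n}\pmod p$ with a \emph{single} irreducible factor $\phi=x-t$, not two components coming from the two roots of $t^2+t+1$ (the integer $t$ reduces to just one of $\zeta,\zeta^{-1}$ modulo $p$). The single $(x-t)$-Newton polygon is one segment from $(0,v_p)$ to $(3^n,0)$ with residual polynomial $y^{\gcd(3^n,v_p)}+c$, separable since $p\nmid 3$; your final exponent is correct, but the route you describe to it is not.
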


Immediately, we see that every prime dividing $t^2+t+1$ will also divide $\disc K_n$ once $n$ is sufficiently large. Moreover, $\ind r_n(x,t;3)>1$ whenever $n \ge 2$. This is in stark contrast to the discriminants of iterated extensions produced by Chebyshev polynomials (c.f. \cite[Theorem 1.1]{g13}).

The equation in Theorem \ref{th:maine index} is obtained via an application of the \emph{Montes algorithm}, which was developed by Gu\`ardia, Montes, and Nart in a recent series of papers \cite{gmn09,gmn11,gmn12}. Their algorithm detects the $p$-adic valuation of the index by gleaning arithmetic data from a refined version of the Newton polygon. We summarize their methods in Section \ref{sec:montes}, and in Section \ref{sec:maineproof} we prove Theorem \ref{th:maine index}. This algorithm, originally designed as a computational tool, has proven to be quite useful in computing the index associated to families of iterated maps. The complexity of the discriminant formula in Theorem \ref{th:maine index} is a testament to the power of the Montes algorithm, as it would be unlikely to recover this formula through observation alone. 

\begin{figure}[!ht]
\centering
\begin{itsapicture}
\begin{tikzpicture}[>=latex]
\tikzstyle{every node} = [fill,circle,outer sep = 1mm,inner sep = .8mm]
	\draw (-.75,0) node (a) {} edge[loop below,->] (a);
	\draw (.75,0) node (a) {} edge[loop below,->] (a);
	\draw (-3,-4.5) node[fill=none] (a) {};	
	\foreach \x in {0,...,6} {
		\draw[->] (a)++(\x*60:.7) node (1) {}
			(a)++(\x*60 + 60:.7) node (ahead)  {}
			(1) -- (ahead);
		\foreach \y in {-1,1} {
			\draw[->] (a)++(\x*60+\y*15:1.6) node (2) {}
				(2) -- (1);
			\foreach \z in {-1,0,1} {
				\draw[->] (a)++(\x*60+\y*15+\z*10:2.5) node (3) {}
					(3) -- (2);
			}
		}
	}
	\draw (3,-4.5) node[fill=none] (a) {};
	\foreach \x in {0,...,6} {
		\draw[->] (a)++(\x*60:.7) node (1) {}
			(a)++(\x*60 + 60:.7) node (ahead) {}
			(1) -- (ahead);
		\foreach \y in {-1,1} {
			\draw[->] (a)++(\x*60+\y*15:1.6) node (2) {}
				(2) -- (1);
			\foreach \z in {-1,0,1} {
				\draw[->] (a)++(\x*60+\y*15+\z*10:2.5) node (3) {}
					(3) -- (2);
			}
		}
	}
	\draw (-4,0) node[fill=none] (a) {};
	\foreach \x in {-1,1} {
		\draw[label distance = 2mm,->] (a)++(90+\x*90:.5) node (0) {}
			(a)+(-90:.75) node (1) {} 
			(0) -- (1)
			(1) edge[loop below] (1);
		\foreach \y in {-1,0,1} {
			\draw[->] (a)++(90+\x*90+\y*60:1.4) node (1) {}
				(1) -- (0);
		}
	}
	\draw (4,0) node[fill=none] (a) {};
	\foreach \x in {-1,1} {
		\draw[label distance = 2mm,->] (a)++(90+\x*90:.5) node (0) {}
			(a)+(-90:.75) node (1) {} 
			(0) -- (1)
			(1) edge[loop below] (1);
		\foreach \y in {-1,0,1} {
			\draw[->] (a)++(90+\x*90+\y*60:1.4) node (1) {}
				(1) -- (0);
		}
	}
\end{tikzpicture}
\end{itsapicture}
\caption{The graph of $\varphi(x;3)$ over $\pp\ff_{127}$.} \label{fig:g(127)}
\end{figure}

In Section \ref{sec:primes}, we consider the decomposition of primes in towers over $\qq(\zeta^+)$ generated by the generalized Rikuna polynomials. Our understanding comes from the study of certain graphs, a method proposed in \cite{ahm05}. Specifically, we study the dynamics of $\varphi(x;\ell)$ over $\pp\ff_q := \ff_q\cup\{\infty\}$, where $\ff_q$ is a finite field of characteristic $p$. The action of $\varphi(x;\ell)$ on $\pp\ff_q$ is naturally captured in a directed graph $G$: the vertices of $G$ correspond to the elements of $\pp\ff_q$, and $G$ contains a edge from $a$ to $b$ if and only if $\varphi(a;\ell) = b$. These graphs display an unusual degree of symmetry (see Figure \ref{fig:g(127)}) akin to that in graphs of the power maps and a few other families (see \cite{cs04,g14,u12,u13,vs04}). The following dynamical terms will be useful for describing these graphs. 

For any set $S$ and a map $f \colon S \to S$, we say that $a \in S$ is \emph{periodic} (for $f$) if $f^n(a) = a$ for some integer $n>0$; the smallest such integer being the period of $a$. If $f^m(a)$ is periodic for some $m \ge 0$, then $a$ is \emph{preperiodic}, and the smallest such integer is the preperiod of $a$. We use $\period(a)$ and $\pper(a)$ to denote the period and preperiod of $a$, respectively.

When $\ell$ is an odd prime, we give an explicit description of the decomposition of primes in iterated towers over $K=\qq(\zeta^+)$ in the following setting. Fix a tower $\{K_n\}$, and let $\fr p\subset \ring_K$ is a prime of norm $q$ satisfying $\fr p \nmid (\disc r_1(x,t;\ell))\ring_K$ and $q \equiv 1 \pmod \ell$. Consider the graph of $\varphi(x;\ell)$ over $\pp\ff_q$, and set 
\begin{align*}
M = \max_{a \in \pp\ff_q}\{\pper(a)\}
\end{align*}
Let $\pper(t)$ denote the preperiod of $(t \bmod{\fr p}) \in \ff_q$. We will prove the following.

\begin{thm} \label{th:maine decomposition}
If $\pper(t) \ge 1$, then $\fr p$ splits completely in $K_n$ for each $1 \le n \le M - \pper(t)$ and is completely inert afterwards. Otherwise, $\fr p$ splits completely in $K_n$ for each $1 \le n \le M$, and for each $n > M$, $K_n$ contains $\sum_{i=1}^M \ell^i$ primes of degree 1 and $\ell^M$ primes of degree $\ell^i$ for each $1 \le i \le n-M$ over $\fr p$.
\end{thm}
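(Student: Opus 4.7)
The plan is to translate the splitting of $\fr p$ in $K_n/K$ into the factorisation of $r_n(x,t;\ell) \bmod \fr p$, then read that factorisation off the structure of the functional graph of $\varphi(x;\ell)$ on $\pp\ff_q$. The discriminant formula (\ref{eq:disc}) gives $\fr p \mid \disc r_n(x,t;\ell)$ iff $\fr p$ divides one of $\ell$, $\zeta-\zeta\inv$, or $t^2-\zeta^+ t+1$, i.e.\ iff $\fr p \mid \disc r_1(x,t;\ell)$; the hypothesis excludes this at every level, and the index $[\ring_{K_n}\colon\ring_K[\theta_n]]$ is coprime to $\fr p$ by (\ref{eq:disc relation}). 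Consequently the Kummer--Dedekind theorem identifies the primes of $K_n$ above $\fr p$ with the monic irreducible factors of $r_n(x,t;\ell)\bmod \fr p$, with residue degree equal to the degree of the corresponding factor.

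The roots of $r_n\bmod \fr p$ in $\overline{\ff_q}$ are precisely the $\ell^n$ preimages of $\bar t:=t\bmod\fr p$ under $\varphi^n$. Because $\varphi$ has $\ff_q$-coefficients, the Frobenius $\sigma\colon x\mapsto x^q$ commutes with $\varphi$, and the orbits of $\sigma$ on these preimages are exactly the irreducible factors of $r_n\bmod\fr p$---an orbit of size $d$ producing a factor of degree $d$. I will combine this with the description of the $\pp\ff_q$-graph that Section \ref{sec:primes} develops under the hypotheses $q\equiv 1\pmod\ell$ and $\fr p\nmid\disc r_1$: every vertex has in-degree $0$ or $\ell$, every tree attached to a cycle has uniform depth $M$, and the $\ell$ preimages of a leaf (a vertex of in-degree $0$) form a single Galois orbit of size $\ell$ in $\ff_{q^\ell}$. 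An inductive application of the same statement shows that the $\ell^k$ iterated preimages of a leaf at backward step $k$ form a single Galois orbit of size $\ell^k$ in $\ff_{q^{\ell^k}}$.

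With these structural facts in hand the theorem splits into two cases. If $\pper(\bar t)\ge 1$ then $\bar t$ sits in the interior of a tree with a uniform subtree of depth $M-\pper(\bar t)$ above it; for $1\le n\le M-\pper(\bar t)$ the backward walk of length $n$ from $\bar t$ stays inside $\pp\ff_q$ and yields $\ell^n$ distinct $\ff_q$-roots, so $\fr p$ splits completely. For $n=(M-\pper(\bar t))+k$ with $k\ge 1$ the walk reaches the $\ell^{M-\pper(\bar t)}$ leaves above $\bar t$ and each leaf contributes a single Galois orbit of size $\ell^k$, so $r_n\bmod\fr p$ factors as $\ell^{M-\pper(\bar t)}$ irreducibles of degree $\ell^k$; this expresses exactly the claim that each of the primes over $\fr p$ in $K_{M-\pper(\bar t)}$ stays inert in every higher $K_n$. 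If $\pper(\bar t)=0$ then $\bar t$ lies on a cycle, and a parallel analysis gives $\ell^n$ $\pp\ff_q$-preimages for $1\le n\le M$ (complete splitting); for $n>M$ one enumerates, for each cycle node $c$ and each leaf $L$ in the subtree at $c$, the backward step $k'$ at which $L$'s iterated preimages contribute to step $n$, and applies the Galois-orbit statement to read off the claimed distribution of primes of degree $\ell^i$.

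The principal obstacle is the dynamical and Galois-theoretic input invoked in the second paragraph: that the $\pp\ff_q$-graph has the uniform structure described, and in particular that iterated preimages of a leaf genuinely descend one degree-$\ell$ extension at each backward step rather than dropping into a smaller subfield. This relies on $\varphi(x;\ell)$ realising generic cyclic $\ell$-extensions, so that $P(x;\ell)-aQ(x;\ell)$ is irreducible of degree $\ell$ over the relevant residue field whenever $a$ has no preimage there, and will be established in Section \ref{sec:primes} before being invoked here.
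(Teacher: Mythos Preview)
Your strategy coincides with the paper's: reduce via Kummer--Dedekind to the factorisation of $r_n\bmod\fr p$, identify the irreducible factors with Frobenius orbits on $\varphi^{-n}(\bar t)$, and read those orbits off the functional graph of $\varphi$ on $\pp\ff_q$ together with its growth over $\ff_{q^{\ell^k}}$. Your case split in the third paragraph is exactly the paper's Theorem~\ref{th:maine factorization}.

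Where you diverge is in the justification of the structural input. You appeal to the ``generic cyclic $\ell$-extension'' property of $r_1$ to argue that the $\ell$ preimages of a leaf form a single Frobenius orbit, and defer the rest. The paper instead makes the M\"obius change of variable $a\mapsto\beta_a=(a-\zeta)/(a-\zeta^{-1})$ explicit: this conjugates $\varphi$ to the $\ell$-th power map on $\ff_q^\times$ (Proposition~\ref{prop:orbit}, Theorem~\ref{th:maine graph}), so the whole graph is read directly off the cyclic structure of $\ff_q^\times$, and Lemma~\ref{lem:valuation} shows the maximal preperiod increases by exactly one with each degree-$\ell$ extension. That gives your single-orbit claim and the uniform depth $M$ at once. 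Your genericity invocation is slightly mis-aimed: the hypothesis $q\equiv 1\pmod\ell$ forces $\zeta\in\ff_q$, so Rikuna's setting ($\zeta\notin K$) does not apply over $\ff_q$; you are really in Kummer theory, and irreducibility of $r_1(x,a;\ell)$ for $a$ a leaf comes down to $Y^\ell-\beta_a$ being irreducible. The conjugacy makes this transparent and supplies precisely the lemma you deferred.
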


\begin{remark}
Prior to Rikuna, Shen and Washington \cite{sw94,sw95} studied an overlapping family of polynomials,  $r_n(x,a/p^n;\ell)$ (written in our notation). The polynomials $P_n(x;\ell)$ and $Q_n(x;\ell)$ defined in Equation \eqref{eq:phidef} coincide with the polynomials $R_n(x)$ and $S_n(x)$, respectively, defined in \cite{sw95}.
\end{remark}

\section{Preliminaries}
Let $\ell >2$ be an arbitrary integer, and let $\zeta$ be a primitive $\ell$-th root of unity. For the remainder of the paper, we fix $K = \qq(\zeta^+)$. We derive two expressions for $r_n(x,t;\ell)$ that do not appear in \cite{cchppw14,sw95}. The first is a generalization of \cite[Corollary 2.6]{r02}. These formulations yield simple factorization results modulo certain ideals.

\begin{prop} \label{prop:genrik}
The generalized Rikuna polynomial satisfies
\begin{align*}
r_n(x,t;\ell) &= \frac{(t-\zeta)(x-\zeta\inv)^{\ell^n} - (t-\zeta\inv)(x-\zeta)^{\ell^n}}{\zeta\inv-\zeta}.\quad  
\end{align*} 
\end{prop}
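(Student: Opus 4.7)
The plan is to exploit the fact that $\varphi(x;\ell)$ is conjugate to the $\ell$-th power map via a M\"obius transformation. Set $\psi(x) := (x-\zeta)/(x-\zeta\inv)$. A direct expansion using the definitions of $P(x;\ell)$ and $Q(x;\ell)$ yields
\[
\varphi(x;\ell) - \zeta = \frac{(x-\zeta)^\ell}{Q(x;\ell)}, \qquad \varphi(x;\ell) - \zeta\inv = \frac{(x-\zeta\inv)^\ell}{Q(x;\ell)},
\]
from which $\psi(\varphi(x;\ell)) = \psi(x)^\ell$. Iterating this identity $n$ times gives $\psi(\varphi^n(x;\ell)) = \psi(x)^{\ell^n}$.

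Clearing denominators in this last equation and solving for $\varphi^n(x;\ell)$ yields
\[
\varphi^n(x;\ell) = \frac{\zeta\inv(x-\zeta)^{\ell^n} - \zeta(x-\zeta\inv)^{\ell^n}}{(x-\zeta)^{\ell^n} - (x-\zeta\inv)^{\ell^n}}.
\]
To identify this with $P_n(x;\ell)/Q_n(x;\ell)$ per Equation \eqref{eq:phidef}, I need to check that this fraction is already in lowest terms and that the numerator, after normalization, is monic. The monic check is immediate: the leading term of both numerator and denominator is $(\zeta\inv-\zeta)x^{\ell^n}$, so dividing through by $\zeta\inv-\zeta$ makes the numerator monic of degree $\ell^n$. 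For the gcd, I observe that
\[
\zeta \cdot (\mathrm{denom}) - (\mathrm{num}) = -(\zeta\inv-\zeta)(x-\zeta)^{\ell^n}, \qquad \zeta\inv \cdot (\mathrm{denom}) - (\mathrm{num}) = -(\zeta\inv-\zeta)(x-\zeta\inv)^{\ell^n},
\]
so any common polynomial divisor of the numerator and denominator must divide both $(x-\zeta)^{\ell^n}$ and $(x-\zeta\inv)^{\ell^n}$, forcing it to be a constant.

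Having identified
\[
P_n(x;\ell) = \frac{\zeta\inv(x-\zeta)^{\ell^n} - \zeta(x-\zeta\inv)^{\ell^n}}{\zeta\inv-\zeta}, \qquad Q_n(x;\ell) = \frac{(x-\zeta)^{\ell^n} - (x-\zeta\inv)^{\ell^n}}{\zeta\inv-\zeta},
\]
the conclusion is a direct computation:
\[
r_n(x,t;\ell) = P_n(x;\ell) - tQ_n(x;\ell) = \frac{[\zeta\inv(x-\zeta)^{\ell^n} - \zeta(x-\zeta\inv)^{\ell^n}] - t[(x-\zeta)^{\ell^n} - (x-\zeta\inv)^{\ell^n}]}{\zeta\inv-\zeta},
\]
which regroups as the claimed formula. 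The main obstacle in this argument is the gcd step, since this is what licenses the identification of the rational expression coming from the conjugation with the polynomials $P_n$ and $Q_n$ as \emph{defined} in \eqref{eq:phidef}; once that is settled, everything else is bookkeeping.
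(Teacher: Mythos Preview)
Your proof is correct and follows essentially the same route as the paper: both arguments compute $\varphi(x;\ell)-\zeta$ and $\varphi(x;\ell)-\zeta\inv$, iterate (you via the conjugacy $\psi\circ\varphi=\psi^\ell$, the paper via a bare induction) to obtain the closed form for $\varphi^n$, and then read off $P_n$ and $Q_n$. Your explicit verification that the numerator and denominator share no common factor is a welcome addition, since the paper's proof tacitly assumes this when invoking the ``lowest terms'' clause of Equation~\eqref{eq:phidef}.
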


\begin{proof}
From Equation \eqref{eq:phidef}, we have
\begin{align*}
\varphi(x;\ell)-\zeta =  \frac{(\zeta\inv-\zeta)(x-\zeta)^\ell}{(x-\zeta)^\ell-(x-\zeta\inv)^\ell}, \quad \text{ and } \quad \varphi(x;\ell) - \zeta\inv = \frac{(\zeta\inv-\zeta)(x-\zeta\inv)^\ell}{(x-\zeta)^\ell-(x-\zeta\inv)^\ell}.
\end{align*}
By induction on $n$, 
\begin{align*}
\varphi^n(x;\ell) &= \frac{\zeta\inv(x-\zeta)^{\ell^n} - \zeta(x-\zeta\inv)^{\ell^n}}{(x-\zeta)^{\ell^n} - (x-\zeta\inv)^{\ell^n}}.
\end{align*}
We introduce a normalizing factor so that the numerator is monic, we obtain
\begin{align} \label{eq:pq}
P_n(x;\ell) &= \frac{\zeta\inv(x-\zeta)^{\ell^n} - \zeta(x-\zeta\inv)^{\ell^n}}{\zeta\inv-\zeta} \quad \text{and} \quad
Q_n(x;\ell) = \frac{(x-\zeta)^{\ell^n} - (x-\zeta\inv)^{\ell^n}}{\zeta\inv-\zeta},
\end{align}
from which the result follows.
\end{proof}

The generalized Rikuna polynomial can also be expressed in terms of Chebyshev polynomials of the second kind. The degree $n$ Chebyshev polynomial of the second kind, denoted by $U_n$, is the unique polynomial that satisfies the trigonometric relation
\begin{align} \label{eq:chtrig}
U_n(2\cos\theta) = \frac{\sin((n+1)\theta)}{\sin(\theta)}.
\end{align}

\begin{prop} \label{prop:genrikch}
The generalized Rikuna polynomial is given by
\begin{align*}
r_n(x,t;\ell) &= x^{\ell^n}-\ell^ntx^{\ell^n-1} + \sum_{k=2}^{\ell^n}(-1)^k{\ell^n\choose k}\Big(t\,U_{k-1}(\zeta^+)-U_{k-2}(\zeta^+)\Big)x^{\ell^n-k}.
\end{align*}
\end{prop}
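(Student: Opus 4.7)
The plan is to start from the closed form in Proposition \ref{prop:genrik} and expand the two binomials directly, then recognize the resulting coefficients as Chebyshev values.

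First I would apply the binomial theorem to write
\begin{align*}
(x-\zeta)^{\ell^n} &= \sum_{k=0}^{\ell^n} (-1)^k \binom{\ell^n}{k} \zeta^{k} x^{\ell^n-k}, \\
(x-\zeta\inv)^{\ell^n} &= \sum_{k=0}^{\ell^n} (-1)^k \binom{\ell^n}{k} \zeta^{-k} x^{\ell^n-k},
\end{align*}
and substitute into Proposition \ref{prop:genrik}. After expanding the products $(t-\zeta)(x-\zeta\inv)^{\ell^n}$ and $(t-\zeta\inv)(x-\zeta)^{\ell^n}$, the coefficient of $x^{\ell^n-k}$ in $r_n(x,t;\ell)$ is
\begin{align*}
(-1)^k \binom{\ell^n}{k} \cdot \frac{(t-\zeta)\zeta^{-k} - (t-\zeta\inv)\zeta^{k}}{\zeta\inv - \zeta}
= (-1)^k \binom{\ell^n}{k} \left[ t\cdot\frac{\zeta^{k}-\zeta^{-k}}{\zeta-\zeta\inv} - \frac{\zeta^{k-1}-\zeta^{-(k-1)}}{\zeta-\zeta\inv}\right].
\end{align*}

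Next I would identify the two bracketed ratios as Chebyshev values. Writing $\zeta = e^{i\theta}$ so that $\zeta^+ = 2\cos\theta$ and $\zeta - \zeta\inv = 2i\sin\theta$, we get $\zeta^j - \zeta^{-j} = 2i\sin(j\theta)$, so
\begin{align*}
\frac{\zeta^{j} - \zeta^{-j}}{\zeta - \zeta\inv} = \frac{\sin(j\theta)}{\sin\theta} = U_{j-1}(2\cos\theta) = U_{j-1}(\zeta^+)
\end{align*}
by \eqref{eq:chtrig}. Taking $j=k$ and $j=k-1$ in turn rewrites the coefficient of $x^{\ell^n-k}$ as
\begin{align*}
(-1)^k \binom{\ell^n}{k}\bigl(t\,U_{k-1}(\zeta^+) - U_{k-2}(\zeta^+)\bigr),
\end{align*}
which is exactly the claimed general term.

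Finally I would check the low-degree terms as a sanity check, using the conventions $U_{-1}=0$ and $U_0=1$: for $k=0$ the bracket equals $t\cdot 0 - U_{-1}(\zeta^+)\cdot\bigl(\text{needs care}\bigr)$, but a direct substitution into the displayed ratio (where $j=0$ gives $0$ and $j=-1$ gives $-1$) yields coefficient $1$, matching the leading $x^{\ell^n}$; for $k=1$ it gives $-\ell^n(tU_0(\zeta^+)-U_{-1}(\zeta^+)) = -\ell^n t$, matching the second term. The only minor subtlety is the treatment of the $k=0$ index and the convention $U_{-1}=0$, but once these are in place the identity follows termwise and nothing further is required.
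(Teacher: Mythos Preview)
Your argument is correct and follows essentially the same route as the paper: binomial-expand the powers of $(x-\zeta^{\pm1})$ and recognize the resulting ratios $(\zeta^j-\zeta^{-j})/(\zeta-\zeta\inv)$ as $U_{j-1}(\zeta^+)$ via \eqref{eq:chtrig}. The only organizational difference is that the paper expands $P_n$ and $Q_n$ separately from \eqref{eq:pq} and then forms $P_n - tQ_n$, whereas you expand the combined expression from Proposition~\ref{prop:genrik} directly; the computations are otherwise identical, and your handling of the $k=0,1$ edge cases (using $U_{-1}=0$ and the direct evaluation of the ratio) is fine.
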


\begin{proof}
Using binomial expansion and Equation \eqref{eq:chtrig}, we obtain
\begin{align} \label{eq:altpq}
P_n(x;\ell) &= \frac{\zeta\inv(x-\zeta)^{\ell^n} - \zeta(x-\zeta\inv)^{\ell^n}}{\zeta\inv-\zeta} = \sum_{k=0}^{\ell^n}(-1)^k{\ell^n\choose k}\frac{\zeta^{k-1}-\zeta^{-(k-1)}}{\zeta\inv-\zeta}x^{\ell^n-k}  \\
&= x^{\ell^n}-\sum_{k=2}^{\ell^n}(-1)^k{\ell^n\choose k} U_{k-2}(\zeta^+)x^{\ell^n-k}, \quad \text{and} \nonumber\\
Q_n(x;\ell) &= \frac{(x-\zeta)^{\ell^n} - (x-\zeta\inv)^{\ell^n}}{\zeta\inv-\zeta} = \sum_{k=0}^{\ell^n} (-1)^k {\ell^n\choose k} \frac{\zeta^k - \zeta^{-k}}{\zeta\inv-\zeta}x^{\ell^n-k} \nonumber\\
&= \ell^nx^{\ell^n-1}-\sum_{k=2}^{\ell^n}(-1)^k{\ell^n\choose k} U_{k-1}(\zeta^+)x^{\ell^n-k}.\nonumber
\end{align}
The result follows from Equation \eqref{eq:genrikdef}.
\end{proof}

We now give two simple factorization results that will be useful in our later analysis.

\begin{lem} \label{lem:rmodp}
Let $\fr a \subset \ring_K$ be any ideal for which $t = \zeta \bmod{\fr a}$. Then $r_n(x,t;\ell) = (x-t)^{\ell^n} \in (\ring_K/\fr a)[x]$.
\end{lem}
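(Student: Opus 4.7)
The plan is to substitute the hypothesis directly into the closed form for $r_n(x,t;\ell)$ supplied by Proposition~\ref{prop:genrik}. Since $\zeta \notin K$, the congruence ``$t \equiv \zeta \pmod{\fr a}$'' only makes literal sense after enlarging scalars, so I would first pass to $\ring_K[\zeta]$ and work modulo $\widetilde{\fr a} := \fr a \,\ring_K[\zeta]$, then descend back to $\ring_K/\fr a$ at the very end.

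Starting from
$$r_n(x,t;\ell) = \frac{(t-\zeta)(x-\zeta\inv)^{\ell^n} - (t-\zeta\inv)(x-\zeta)^{\ell^n}}{\zeta\inv-\zeta},$$
the first summand contains the factor $t-\zeta \in \widetilde{\fr a}$ and so vanishes modulo $\widetilde{\fr a}$. In the surviving summand, the relation $t \equiv \zeta$ reduces the coefficient $-(t-\zeta\inv)/(\zeta\inv-\zeta)$ to $-(\zeta-\zeta\inv)/(\zeta\inv-\zeta) = 1$ and turns $(x-\zeta)^{\ell^n}$ into $(x-t)^{\ell^n}$; hence $r_n(x,t;\ell) \equiv (x-t)^{\ell^n}$ in $(\ring_K[\zeta]/\widetilde{\fr a})[x]$. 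Both sides lie in $\ring_K[x]$, and the natural map $\ring_K/\fr a \into \ring_K[\zeta]/\widetilde{\fr a}$ is injective because $\ring_K[\zeta]$ is free of rank $2$ over $\ring_K$, so the congruence descends to $(\ring_K/\fr a)[x]$ as required.

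The only mild obstacle is clarifying the hypothesis: because $\zeta \notin K$, ``$t \equiv \zeta \pmod{\fr a}$'' for $\fr a \subset \ring_K$ is best read as asserting the existence of the extension $\widetilde{\fr a}$ above. Equivalently, a Galois-invariance argument (applying the nontrivial element of $\gal(K(\zeta)/K)$ to the difference $r_n(x,t;\ell) - (x-t)^{\ell^n}$) shows this difference is anti-invariant, hence divisible by $\zeta\inv-\zeta$, and vanishes at $t=\zeta$ and $t=\zeta\inv$, so it is in fact divisible by $(t-\zeta)(t-\zeta\inv) = t^2 - \zeta^+ t + 1$ in $\ring_K[t,x]$. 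Thus in applications (such as to primes $p \mid t^2 - \zeta^+ t + 1$) the lemma is invoked in the unambiguous form $t^2-\zeta^+ t + 1 \in \fr a$. Beyond this interpretive point, the argument is a direct substitution with no serious complication.
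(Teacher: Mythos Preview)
Your main argument is correct and is exactly the paper's approach: the paper's proof is the single line ``The result follows immediately from Proposition~\ref{prop:genrik},'' and your first paragraph just spells out that substitution, together with the (reasonable) care of passing to $\ring_K[\zeta]$ and back.

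One small correction to your second paragraph: the difference $r_n(x,t;\ell) - (x-t)^{\ell^n}$ already lies in $\ring_K[t,x]$, so it is \emph{invariant}, not anti-invariant, under $\gal(K(\zeta)/K)$; in particular it is not ``hence divisible by $\zeta^{-1}-\zeta$.'' The intended conclusion---divisibility by $t^2-\zeta^+ t+1$---is still right, but the correct reasoning is simply that the invariant polynomial vanishes at $t=\zeta$ (by your substitution), hence by Galois symmetry also at $t=\zeta^{-1}$, so $(t-\zeta)(t-\zeta^{-1})$ divides it. This aside does not affect the validity of your proof of the lemma itself.
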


\begin{proof}
The result follows immediately from Proposition \ref{prop:genrik}.
\end{proof}

Note that if $\ell$ is an odd prime, then the ideal $(\ell)$ is totally ramified in $K = \qq(\zeta^+)$. 

\begin{lem} \label{lem:rmodl}
Let $\ell$ be an odd prime, and let $\fr l \subset \ring_K$ be the prime ideal containing $\ell$. Then $r_n(x,t;\ell) = (x-1)^{\ell^n} \in (\ring_K/\fr l)[x]$.
\end{lem}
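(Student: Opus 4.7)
The plan is to expand $r_n(x,t;\ell)$ using the Chebyshev formulation of Proposition \ref{prop:genrikch} and reduce each coefficient modulo $\fr l$. I would assemble three standard inputs first. First, because $\ell$ is totally ramified in $\qq(\zeta_\ell)$ with unique prime $\fr L = (1-\zeta)$ lying over $\fr l$, one has $\zeta \equiv 1 \pmod{\fr L}$, so $\zeta^+ = \zeta+\zeta^{-1} \equiv 2 \pmod{\fr L}$; since $\zeta^+ \in \ring_K$ and $\fr L \cap \ring_K = \fr l$, this gives $\zeta^+ \equiv 2 \pmod{\fr l}$. Second, the trigonometric relation in Equation \eqref{eq:chtrig}, evaluated in the limit $\theta \to 0$ (or equivalently by a one-line induction on the Chebyshev recurrence), yields $U_j(2) = j+1$, and therefore $U_j(\zeta^+) \equiv j+1 \pmod{\fr l}$. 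Third, by Lucas's theorem applied to the base-$\ell$ digits of $\ell^n$ (which are $1$ in position $n$ and $0$ elsewhere), one has $\binom{\ell^n}{k} \equiv 0 \pmod{\ell}$ whenever $1 \le k \le \ell^n - 1$, hence $\binom{\ell^n}{k} \in \fr l$ for such $k$.

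With these three facts in hand, I would substitute into Proposition \ref{prop:genrikch} and observe what survives modulo $\fr l$. The coefficient of $x^{\ell^n-1}$ is $-\ell^n t$, which vanishes because $\ell^n \in \fr l$. For $2 \le k \le \ell^n - 1$, the binomial factor $\binom{\ell^n}{k}$ kills the term. The only remaining contributions come from the leading $x^{\ell^n}$ (with coefficient $1$) and the $k = \ell^n$ constant term, whose coefficient is
\[
(-1)^{\ell^n}\bigl(t\, U_{\ell^n - 1}(\zeta^+) - U_{\ell^n - 2}(\zeta^+)\bigr) \equiv (-1)^{\ell^n}\bigl(t\ell^n - (\ell^n - 1)\bigr) \equiv -1 \pmod{\fr l},
\]
using $\ell$ odd (so $(-1)^{\ell^n} = -1$) and $\ell^n \equiv 0 \pmod{\fr l}$. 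Thus $r_n(x, t; \ell) \equiv x^{\ell^n} - 1 \pmod{\fr l}$, and since $\ring_K/\fr l$ has characteristic $\ell$, iterating the Frobenius identity $x^\ell - 1 = (x-1)^\ell$ gives $x^{\ell^n} - 1 = (x-1)^{\ell^n}$, completing the proof.

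There is essentially no serious obstacle here; the only thing to watch is the sign and the constant term bookkeeping. A slightly more elegant alternative would be to combine Lemma \ref{lem:rmodp} (applied at $t = 1$ with the ideal $\fr L \subset \ring_K[\zeta]$, using $\zeta \equiv 1 \pmod{\fr L}$) with the identity $r_n(x,t;\ell) - r_n(x,1;\ell) = -(t-1)Q_n(x;\ell)$ extracted from Proposition \ref{prop:genrik}, after showing $Q_n(x;\ell) \equiv 0 \pmod{\fr l}$ via the same binomial-coefficient argument as above. Either route reaches the conclusion in a few lines.
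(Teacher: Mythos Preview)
Your proof is correct and follows essentially the same route as the paper: both use the Chebyshev expansion of Proposition~\ref{prop:genrikch}, kill the middle terms via $\binom{\ell^n}{k}\equiv 0\pmod\ell$, and reduce to $x^{\ell^n}-1=(x-1)^{\ell^n}$ in characteristic $\ell$. The only difference is in handling the constant term: the paper evaluates $U_{\ell^n-1}(\zeta^+)=0$ and $U_{\ell^n-2}(\zeta^+)=-1$ exactly via the trigonometric relation~\eqref{eq:chtrig}, whereas you first reduce $\zeta^+\equiv 2\pmod{\fr l}$ and then use $U_j(2)=j+1$; both computations land at the same place.
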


\begin{proof}
Note that $\zeta^+ = 2\cos(2n/\ell)$, so by Equation \eqref{eq:chtrig},
\begin{align*}
U_{\ell^n-1}(\zeta^+) = \frac{\sin(2n\ell^{n-1})}{\sin(2n/\ell)} = 0, \quad \text{and} \quad U_{\ell^n-2}(\zeta^+) = \frac{\sin(2n\ell^{n-1}-2n/\ell)}{\sin(2n/\ell)} = -1.
\end{align*}
Combined with Proposition \ref{prop:genrikch}, we have
\begin{align*}
r_n(x,t;\ell) = x^{\ell^n}-t\,U_{\ell^n-1}(\zeta^+) + U_{\ell^n-2}(\zeta^+) = x^{\ell^n}-1 = (x-1)^{\ell^n} \in (\ring_K/\fr l)[x].
\end{align*}
\end{proof}

\section{Montes algorithm} \label{sec:montes}

In this section, we describe the Montes algorithm \cite{gmn09,gmn11,gmn12}, which is the main tool we use to compute the expression for the index in Theorem \ref{th:maine index}. The key result is Theorem \ref{th:gmn}, which relates the $p$-adic valuation of the index to specialized Newton polygons. We proceed to set up the notation for understanding this theorem following the presentations in \cite{fmn09,g13}.

For a general number field $\qq(\theta)$, where $\theta$ is an algebraic integer with minimal polynomial $f$, let
\begin{align*}
\ind f := [\ring_{\qq(\theta)} \colon \zz[\theta]], \quad \text{and} \quad \ind_p f := \nu_p(\ind f),
\end{align*}
where $\nu_p$ denotes the standard $p$-adic valuation.

The Montes algorithm computes $\ind_p f$ as follows. First, factor $f$ modulo $p$ into irreducibles:
\begin{align*}
f(x) \equiv \phi_1(x)^{e_1}\cdots \phi_s(x)^{e_s}\pmod p.
\end{align*}
For each factor $\phi :=\phi_i$, the \emph{$\phi$-development of} $f$ is the expansion
\begin{align*}
f(x) = a_0(x)\phi(x) + a_1(x)\phi(x)^2 + \cdots + a_d(x)\phi(x)^d.
\end{align*}
Let $\eucal N_\phi$ denote the lower convex hull of the set of points $\left\{ \big(k,\nu_p(a_k(x))\big) \colon 0 \le k \le d \right\}$, where the $p$-adic valuation of a polynomial $a(x) = \sum b_i x^i$ is defined to be 
\begin{align*}
\nu_p(a(x)) = \min \{\nu_p(b_i)\}.
\end{align*}
The edges of $\eucal N_\phi$ with negative slope form the \emph{$\phi$-Newton polygon}, denoted by $\eucal N_\phi^-$. 

\begin{figure}[!ht]
\centering
\begin{tikzpicture}[scale=.6]
\foreach \x in {0,8} {
	\draw[gray] (\x-.5,0)--(\x+6.5,0) (\x+0,-.5)--(\x+0,3.5);
	\foreach \y in {1,2,3}
		\draw (.1+\x,\y) -- (-.1+\x,\y) node[left,font=\footnotesize]{\y};
	\foreach \y in {1,...,6}
		\draw (\y+\x,.1) -- (\y+\x,-.1) node[below,font=\footnotesize]{\y};
	\foreach \y/\z in {0/3,2/1,3/2,4/0,5/0,6/2}
		\fill (\x+\y,\z) circle (3pt);
	\draw[line width = .5mm] (\x+0,3) -- (\x+2,1) -- (\x+4,0);
	}
\draw[line width=.5mm] (4,0)--(5,0)--(6,2);
\node at (3,3) {$\eucal N_\phi$};
\node at (11,3) {$\eucal N_\phi^-$};
\end{tikzpicture}
\caption{A $\phi$-Newton polygon (left) and its principal part (right).}\label{fig:principal}
\end{figure}
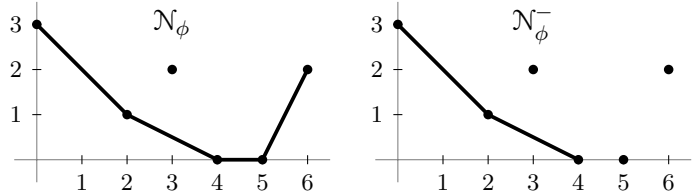

We also define a reduction map 
\begin{align*}
\red_\phi \colon \zz[x] &\to \ff_p[x]/(\phi) \\
a(x) &\mapsto \overline{a(x)/p^{\nu_p(a(x))}}.
\end{align*}
To each lattice point on the $\phi$-Newton polygon, we attach a residual coefficient
\begin{align*}
\res(k) = 
\begin{cases*}
\red_\phi(a_k(x)) & if $\big(k,\nu_p(a_k(x))\big)$ is on $\eucal N_\phi^-$\\
0 & if $\big(k,\nu_p(a_k(x)\big)$ is above $\eucal N_\phi^-$.
\end{cases*}
\end{align*}
Let $S$ be an side of $\eucal N_\phi^-$, and denote the left and right endpoints of $S$ by $(x_0,y_0)$ and $(x_1,y_1)$, respectively. The \emph{degree} of $S$, denoted by $\deg S$, is $\gcd(y_1-y_0,x_1-x_0)$. The \emph{residual polynomial} associated to $S$ is
\begin{align*}
R_S(y) = \sum_{i=0}^{\deg S} \res\left(x_0+i\frac{(x_1-x_0)}{\deg S}\right)y^i.
\end{align*}
We note that $\res(x_0)$ and $\res(x_1)$ are necessarily non-zero, and in particular, it is always the case that $\deg S = \deg R_S$.

\begin{example} \label{ex:3.1}
Consider the irreducible polynomial $f(x) = x^4 + 23x^3 + 12x^2 + 11x + 7$, which factors over $\ff_3[x]$ into $f(x) \equiv (x+2)^4 \pmod 3$. Set $\phi(x) = x+2$, then the $\phi$-development of $f$ is
\begin{align*}
f(x) =  - 135 + 207(x+2) - 102(x+2)^2 + 15(x+2)^3 + (x+2)^4.
\end{align*}
The $\phi$-Newton polygon has two sides: one of slope $-1$ and degree 2, the other of slope $-1/2$ and degree 1. The residual coefficients are $c_0 = 1$, $c_1 = -1$, $c_2 = -1$, $c_3 = 0$, and $c_4 = 1$, and the residual polynomials attached to these sides are $R_1(f)(y) = -y^2+1$ and $R_2(f)(y) = y-1$, respectively. See Figure \ref{fig:polygon}.

\begin{figure}[!ht]
\begin{tikzpicture}
\draw[gray] (-.5,0)--(4.5,0) (0,-.5) -- (0,3.5);
\foreach \x in {1,2,3}
	\draw (.1,\x) -- (-.1,\x) node[left,font=\footnotesize]{\x};
\foreach \x in {1,2,3,4}
	\draw (\x,.1) -- (\x,-.1) node[below,font=\footnotesize]{\x};
\foreach \x/\y in {0/3,1/2,2/1,3/1,4/0}
	\fill (\x,\y) circle (3pt);
\draw[line width = .5mm] (0,3) -- node[above right] {$S_1$} (2,1) -- node[right=3mm] {$S_2$}(4,0);
\node at (1,1) {+};
\end{tikzpicture}
\caption{The $(x+2)$-Newton polygon for $f(x) = x^4 + 23x^3 + 12x^2 + 11x + 7$ at $p=3$.} \label{fig:polygon}
\end{figure}
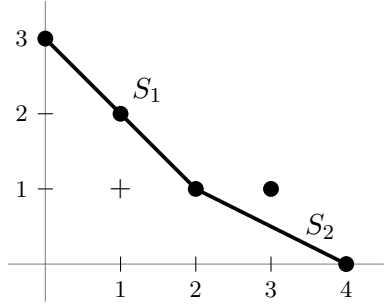
\end{example}

We say that $f$ is \emph{$\phi$-regular} if $R_S(y)$ is separable for each side $S$ of $\eucal N^-_\phi$, and $f$ is \emph{$p$-regular} if $f$ is $\phi$-regular for each irreducible factor $\phi$ of $f$. Finally, let $L_\phi$ denote the number of integral lattice points $(x,y)$ strictly in the first quadrant (that is, $x>0$ and $y>0$) on or under $\eucal N_\phi^-$, and define
\begin{align*}
\ind_\phi f = L_\phi\cdot\deg\phi.
\end{align*}

\begin{thm} \label{th:gmn}
Let $f \in \zz[x]$ be a monic and separable polynomial. Then 
\begin{align*}
\ind_p f \ge \sum_{i=0}^s \ind_{\phi_i} f
\end{align*}
with equality if and only if $f$ is $p$-regular.
\end{thm}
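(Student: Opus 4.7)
The plan is to reduce to a local statement via Hensel's lemma, and then analyze each factor through its $\phi$-Newton polygon, following the classical framework of Ore and its generalization by Montes.

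First I would pass to $\zz_p$. Since the mod-$p$ factorization $\bar f = \prod_i \phi_i^{e_i}$ has pairwise coprime irreducibles $\phi_i$, Hensel's lemma yields a factorization $f = \prod_i f_i$ in $\zz_p[x]$ with $\bar f_i = \bar\phi_i^{e_i}$, inducing a decomposition $\zz_p[\theta] \cong \prod_i \zz_p[x]/(f_i)$. The $p$-part of the index splits as $\ind_p f = \sum_i \ind_p f_i$, and the $\phi_i$-Newton polygons of $f$ and $f_i$ coincide, so it suffices to prove the theorem in the single-factor case $\bar f = \bar\phi^e$.

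Next, for such an $f$, I would use the $\phi$-Newton polygon to decompose the roots by slope. Each negative-slope side $S$ corresponds, via a $\phi$-adic valuation extending $v_p$, to a factor of $f$ over a suitable completion whose roots share a common $\phi$-adic valuation; when $R_S(y)$ is separable, its distinct irreducible factors lift to a finer factorization into pieces of known ramification and residue degree. A local conductor-discriminant count then identifies $L_\phi \cdot \deg \phi$ as the exact contribution of this $\phi$-piece to $\ind_p f$: each lattice point strictly inside the first quadrant and on or beneath $\eucal N_\phi^-$ corresponds to a $\zz_p$-basis element of the maximal order that lies outside $\zz_p[\theta]$. This yields equality in the $p$-regular case.

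The hard part is the non-regular case. When some $R_S$ has a repeated factor, extra primes are hidden inside the corresponding piece of $f$, and one must iterate the construction: build a second-level key polynomial $\phi^{(1)}$ of larger degree approximating the repeated root, draw a second-order Newton polygon, and count additional lattice points. The central claim --- that every such refinement contributes a strictly positive amount to $\ind_p f$ beyond $\sum_i \ind_{\phi_i} f$, and that the process terminates after finitely many levels --- is the heart of the Gu\`ardia-Montes-Nart refinement of Ore's theorem and requires the full machinery of Okutsu frames (or Montes ``types''). Establishing that this accumulation matches $\ind_p f$ exactly, with equality precisely when no $R_S$ has a repeated factor, is the main obstacle.
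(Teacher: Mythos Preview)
The paper does not prove this theorem at all: its entire proof is the single sentence ``See \cite[Section~4]{gmn12}.'' So there is nothing to compare at the level of argument. Your outline is a faithful sketch of the strategy that actually appears in that reference: the Hensel reduction over $\zz_p$ to a single irreducible residual factor $\phi$, Ore's theorem of the polygon giving the lattice-point count as the exact first-order index contribution when every $R_S$ is separable, and the iteration via higher-order types (Okutsu frames) to handle the non-regular case. You are also candid that the hard part---strict positivity of the higher-order contributions and termination with the exact value of $\ind_p f$---is precisely the content of the cited Gu\`ardia--Montes--Nart paper; in that sense your proposal and the paper's citation converge on the same external source rather than offering independent arguments.
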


\begin{proof}
See \cite[Section 4]{gmn12}.
\end{proof}

\begin{example}[Example \ref{ex:3.1} continued]
Returning to the previous example, we see that both of the residual polynomials are separable over $\ff_3[y]$, hence $f$ is 3-regular. By Theorem \ref{th:gmn}, we conclude that $\ind_3(f) = 3$ since $\deg \phi = 1$ and there are three points with integral coordinates on or below the polygon. This result is verified in \texttt{PARI} \cite{PARI}.
\end{example}

Pick's Theorem \cite{p99} provides a simple method for counting lattice points inside bounded regions of the plane.

\begin{lem}[Pick's Theorem] \label{lem:pick}
Let $A$ be the area of a simple closed lattice polygon. Let $B$ denote the number of lattice points on the Polygon sides and $I$ the number of lattice points in the interior of the polygon. Then $I = A +1- B/2$.
\end{lem}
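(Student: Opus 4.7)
The plan is to prove Pick's formula by showing that the functional
\[
f(P) := I(P) + \tfrac{1}{2}B(P) - 1
\]
is additive under gluing lattice polygons along a common lattice edge, and then verifying $f(P)=A(P)$ on a few easy base cases before reducing the general case to these by triangulation. The overall structure is thus: additivity $+$ base cases $+$ triangulation $\Rightarrow$ the identity $I = A + 1 - B/2$ on arbitrary simple lattice polygons.

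First I would verify additivity. Suppose a simple lattice polygon $P$ is obtained by gluing two simple lattice polygons $P_1$ and $P_2$ along a shared lattice edge containing $k \ge 2$ lattice points (counting both endpoints). Each lattice point strictly between the endpoints of the shared edge is a boundary point of both $P_1$ and $P_2$ but becomes an interior point of $P$, while the two shared endpoints remain on the boundary. A bookkeeping computation yields
\[
I(P) = I(P_1) + I(P_2) + (k-2), \qquad B(P) = B(P_1) + B(P_2) - 2(k-2) - 2,
\]
and substituting into the definition of $f$ produces $f(P) = f(P_1) + f(P_2)$. Since area is manifestly additive in the same sense, it suffices to prove $f=A$ on the pieces of any triangulation.

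Next I would handle the base cases. For an axis-aligned $a \times b$ rectangle $R$ one computes directly that $A(R) = ab$, $B(R) = 2(a+b)$, and $I(R) = (a-1)(b-1)$, which gives $f(R) = ab = A(R)$. Cutting $R$ along a diagonal yields two congruent right triangles $T$ with legs parallel to the axes; by additivity, $2f(T) = f(R)$, so $f(T) = ab/2 = A(T)$. An arbitrary lattice triangle $T'$ can be embedded in its smallest axis-aligned bounding rectangle, and the complement decomposes into at most three right triangles with axis-parallel legs (plus possibly a rectangle). Peeling these off one at a time and using additivity, together with the case already handled, gives $f(T') = A(T')$.

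Finally, I would invoke the classical fact that any simple lattice polygon admits a triangulation in which every triangle has lattice vertices, and induct on the number of triangles in such a triangulation: at each step one glues a lattice triangle onto the previously assembled piece along a common lattice edge, and additivity together with the triangle base case preserves $f = A$. Rearranging $f(P) = A(P)$ gives the statement. The main technical obstacle is the careful combinatorial bookkeeping in the additivity step, where one must correctly account for how shared-edge lattice points change their status (interior vs.\ boundary) and avoid double counting the shared endpoints; once additivity is in hand, the rest of the argument is purely a reduction via triangulation.
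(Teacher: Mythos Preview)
Your argument is a correct, standard proof of Pick's Theorem via the additivity of $f(P)=I(P)+\tfrac12 B(P)-1$ and reduction to lattice triangles. The bookkeeping in the gluing step is right, the rectangle and right-triangle base cases check out, and the reduction of a general lattice triangle to its bounding rectangle minus axis-parallel right triangles is the usual manoeuvre (the only thing to be slightly careful about is that the complement decomposition can be justified once you note that at least one vertex of the triangle must sit at a corner of its bounding box, since some vertex realizes two of the four extremal coordinates). The final induction via ear-clipping triangulation---so that each new triangle is glued along a single lattice edge---is exactly what makes the additivity applicable.

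As for comparison with the paper: there is nothing to compare. The paper does not prove Lemma~\ref{lem:pick}; it merely states the result and cites Pick's original article \cite{p99}, using it as a black box in the lattice-point counts of Theorems~\ref{th:ind3a}, \ref{th:ind3}, and \ref{th:indp}. Your write-up supplies what the paper omits.
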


\section{Shanks' specialization: $\ell=3$} \label{sec:maineproof}
In this section, we apply the Montes algorithm to compute the index associated to the generalized Rikuna polynomials specialized at $\ell=3$, proving Theorem \ref{th:maine index}. Unless otherwise noted, we set $r_n(x,t):= r_n(x,t;3)$ throughout this section. From the discriminant formula in Equation \eqref{eq:disc}, we know that the only primes that could divide the index are 3 and the primes dividing $t^2+t+1$. We address these cases separately.

\subsection{Index calculation: $p=3$}
Let $u_n:=U_n(1)$. From Proposition \ref{prop:genrikch},
\begin{align} \label{eq:rik3}
r_n(x,t) &= \sum_{k=0}^{3^n} {3^n\choose k} \Big(tu_{k+2}+u_{k+1}\Big)x^{3^n-k} = \sum_{k=0}^{3^n}{3^n\choose k}\Big(tu_{k+2}-u_k\Big)x^k.
\end{align}
Recall from Lemma \ref{lem:rmodl} that $r_n(x,t) \equiv (x-1)^{3^n}\pmod 3$, hence there is only one factor, $\phi(x) = x-1$, to consider. We obtain the $(x-1)$-development of $r_n(x,t)$ using Taylor expansion:
\begin{align} \label{eq:taylor3}
r_n(x,t) = \sum_{m=0}^{3^n}\frac{r^{(m)}_n(1,t)}{m!}(x-1)^m,
\end{align}
where $r^{(m)}_n(x,t)$ denotes the $m$-th derivative of $r_n(x,t)$ with respect to $x$. By Equation \eqref{eq:rik3}, we have
\begin{align*}
r_n^{(m)}(x,t)&=\sum_{k=0}^{3^n-m}{3^n\choose k+m}\frac{(k+m)!}{k!}\Big(tu_{k+m+2}-u_{k+m}\Big)x^k \\
&= \frac{3^n!}{(3^n-m)!}\sum_{k=0}^{3^n-m}{3^n-m\choose k} \Big(tu_{k+m+2}-u_{k+m}\Big)x^k.
\end{align*}
The coefficients in this expression are given by two known sequences (see \cite{oeis} \href{http://oeis.org/A057681}{A057681}, \href{http://oeis.org/A057083}{A057083}):
\begin{align*}
&\sum_{k=0}^{3^n-m}{3^n-m\choose k} u_{k+m+2} = \sum_{k=0}^{\floor{\frac{3^n-m-1}{3}}}(-1)^{k+1}{3^n-m+1 \choose 3k+2} = (-27)^{\floor{\frac{3^n-m}{6}}}b_m, \quad \text{and}\\
-&\sum_{k=0}^{3^n-m}{3^n-m\choose k} u_{k+m} = \sum_{k=0}^{\floor{\frac{3^n-m+1}{3}}}(-1)^k{3^n-m+1\choose 3k} = (-27)^{\floor{\frac{3^n-m}{6}}}c_m,
\end{align*}
where
\begin{align*}
b_m = 
\begin{cases*}
-6 & if $m \equiv 0 \pmod 6$\\
-3 & if $m  \equiv 1 \pmod 6$\\
-1 & if $m \equiv 2 \pmod 6$\\
0 & if $m \equiv 3 \pmod 6$\\
-9 & if $m \equiv 4 \pmod 6$\\
-9 & if $m \equiv 5 \pmod 6$
\end{cases*}
\qquad \text{and} \qquad
c_m = \begin{cases*}
-3 & if $m \equiv 0 \pmod 6$\\
0 & if $m \equiv 1 \pmod 6$\\
1 & if $m  \equiv 2 \pmod 6$\\
1 & if $m \equiv 3 \pmod 6$\\
-18 & if $m \equiv 4 \pmod 6$\\
-9 & if $m \equiv 5 \pmod 6$.
\end{cases*}
\end{align*}
Thus setting
\begin{align} \label{eq:a_n,m}
&a_{n,m} = {3^n\choose m}\sum_{k=0}^{3^n-m}{3^n-m\choose k} \Big(tu_{k+m+2}-u_{k+m}\Big),\nonumber\\
\intertext{we have}
&a_{n,m}=\pm{3^n\choose m}3^{\floor{(3^n-m)/2}}e_m(t), \quad \text{where}\\
&e_m(t)=
\begin{cases*}
2t+1 & if $m \equiv 0 \pmod 6$ \\
t & if $m \equiv 1 \pmod 6$ \\
t-1 & if $m \equiv 2 \pmod 6$ \\
1 & if $m \equiv 3 \pmod 6$ \\
t+2 & if $m \equiv 4 \pmod 6$ \\
t+1 & if $m \equiv 5 \pmod 6$.
\end{cases*}\nonumber
\end{align}
Equation \eqref{eq:taylor3} is now reduced to 
\begin{align*}
r_n(x,t) = \sum_{m=0}^{3^n}\frac{r_n^{(m)}(1,t)}{m!}(x-1)^m = \sum_{m=0}^{3^n}a_{n,m}(x-1)^m,
\end{align*}
and we are left to compute the $3$-adic valuation of $a_{n,m}$.

\begin{thm} \label{th:ind3a}
If $t \not \equiv 1 \pmod 3$, then $\ind_3 r_n(x,t) = (3^n-1)(3^n-3)/4$.
\end{thm}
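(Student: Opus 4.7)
The plan is to apply Theorem \ref{th:gmn} to the single factor $\phi(x) = x - 1$, which by Lemma \ref{lem:rmodl} is the unique irreducible divisor of $r_n(x,t)$ modulo $3$. Consequently $\ind_3 r_n = \ind_\phi r_n = L_\phi$, provided I first verify that $r_n$ is $3$-regular. The $\phi$-development is already laid out in Equation \eqref{eq:a_n,m}, with coefficients $a_{n,m} = \pm{3^n\choose m}\,3^{\floor{(3^n - m)/2}}\,e_m(t)$. Under the hypothesis $t \not\equiv 1 \pmod 3$, each of $2t+1$, $t-1$, and $t+2$ is a unit at $3$, so $\nu_3(e_m(t)) \ge 0$ for every $m$, with $\nu_3(e_0(t)) = \nu_3(2t+1) = 0$ in particular.

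First I would pin down the endpoints of $\eucal N_\phi^-$. Since $3^n \equiv 3 \pmod 6$ we have $e_{3^n}(t) = 1$, and so $a_{n,3^n} = \pm 1$ gives the right endpoint $(3^n, 0)$; the left endpoint is $(0, (3^n-1)/2)$ by the preceding remark. Next I would show every intermediate point $(m, \nu_3(a_{n,m}))$ with $0 < m < 3^n$ lies strictly above the segment $L$ joining these endpoints. Kummer's theorem gives $\nu_3{3^n\choose m} = n - \nu_3(m)$, and since $\nu_3(m) \le n-1$ in this range,
\[
\nu_3(a_{n,m}) \;\ge\; 1 + \floor{(3^n - m)/2} \;\ge\; \frac{3^n - m + 1}{2},
\]
while $L$ has height $(3^n-1)(3^n-m)/(2 \cdot 3^n)$ at abscissa $m$. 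Subtracting shows the gap is at least $\tfrac{1}{2} + (3^n - m)/(2 \cdot 3^n) > 0$, so $\eucal N_\phi^-$ consists of the single side $L$, whose degree is $\gcd(3^n, (3^n-1)/2) = 1$.

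Because the side has degree $1$, its residual polynomial is linear and automatically separable, so $r_n$ is $\phi$-regular, hence $3$-regular. It remains to compute $L_\phi$, the number of lattice points in the open first quadrant lying on or below $L$. Since $\gcd(3^n, (3^n-1)/2) = 1$, the only lattice points on $L$ are its endpoints, and both lie on a coordinate axis; hence $L_\phi$ equals the number of interior lattice points of the triangle with vertices $(0,0)$, $(3^n,0)$, $(0,(3^n-1)/2)$. A boundary count gives $B = (3 \cdot 3^n + 1)/2$, and Pick's Theorem (Lemma \ref{lem:pick}) yields
\[
L_\phi \;=\; \frac{3^n(3^n - 1)}{4} + 1 - \frac{3 \cdot 3^n + 1}{4} \;=\; \frac{(3^n-1)(3^n - 3)}{4}.
\]

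The chief technical step is the inequality showing that no point $(m, \nu_3(a_{n,m}))$ dips to or below $L$; all the work reduces to combining the crude bound $\nu_3(m) \le n - 1$ with $\nu_3(e_m(t)) \ge 0$, avoiding any casework on $m \bmod 6$. The remainder is bookkeeping: Kummer's formula for the binomial valuation, the automatic regularity of a degree-$1$ Newton side, and Pick's theorem for the lattice-point count.
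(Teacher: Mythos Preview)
Your proof is correct and follows essentially the same approach as the paper's own argument: identify the single factor $\phi=x-1$, use the bound $\nu_3(a_{n,m})\ge 1+\lfloor(3^n-m)/2\rfloor$ for $0<m<3^n$ to force a one-sided Newton polygon of degree~$1$, and finish with Pick's theorem. You supply a few details the paper leaves implicit (the explicit gap $\tfrac12+(3^n-m)/(2\cdot3^n)$ and the boundary count $B=(3\cdot3^n+1)/2$), but the structure is identical.
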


\begin{proof}
By Equation \eqref{eq:a_n,m}, we have
\begin{align*}
\nu_3(a_{n,0}) = \frac{3^n-1}{2}, \qquad \nu_3(a_{n,3^n}) = 0,
\end{align*}
and for each $m$ satisfying $0 < m < 3^n$,
\begin{align} \label{eq:ind3a valuation}
\nu_3(a_{n,m}) &= \nu_3{3^n \choose m}+\floor{\frac{3^n-m}{2}}+ \nu_3(e_m(t)) \ge 1+\floor{\frac{3^n-m}{2}} \ge \frac{3^n-m+1}{2}.
\end{align}
One may now verify that each vertex $\left(m,\nu_3( a_{n,m})\right)$ lies above the line segment joining $\left(0,\frac{3^n-1}{2}\right)$ to $\left(3^n,0\right)$, hence the $(x-1)$-Newton polygon is comprised of one side. See Figure \ref{fig:ind3a figure}. Moreover, the degree of the residual polynomial associated to this side is 1, hence by Theorem \ref{th:gmn}, the 3-valuation of the index is equal to the number of lattice points contained inside the triangle with vertices $\left(0,\frac{3^n-1}{2}\right)$, $\left(3^n,0\right)$, and $(0,0)$. Applying Lemma \ref{lem:pick}, we obtain the result.
\end{proof}

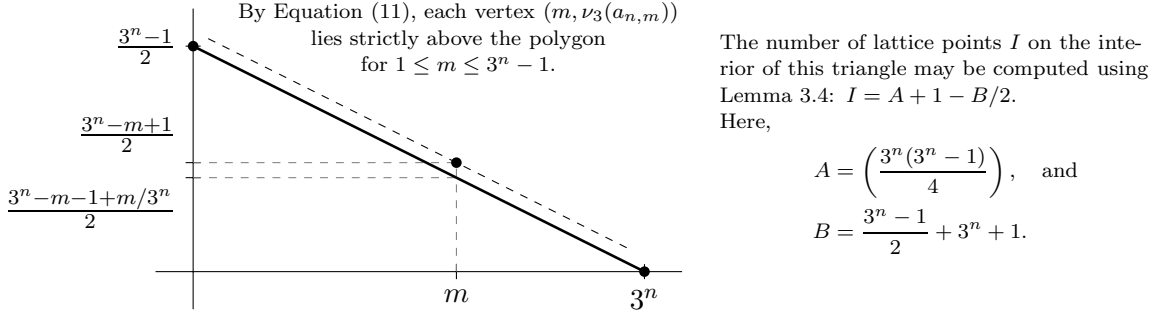
\begin{figure}[!ht]
\centering
\begin{tikzpicture}
\draw (-.5,0) -- (6.5,0) 
	(0,-.5) -- (0,3.5)
	(-.1,3) node[left]{$\frac{3^n-1}{2}$}-- (.1,3) 
	(6,-.1) node[below]{$3^n$} -- (6,.1) 
	(3.5,-.1) node[below]{$m$} -- (3.5,.1)
	(-.1,1.25) node[below left] {$\frac{3^n-m-1+m/3^n}{2}$} -- (.1,1.25)
	(-.1,1.45) node[above left] {$\frac{3^n-m+1}{2}$} -- (.1,1.45);
\draw[dashed,gray] (0,1.25) -- (3.5,1.25) (0,1.45) -- (3.5,1.45) (3.5,0) -- (3.5,1.45);
\draw[dashed] (.2,3.1) node[right,font=\scriptsize,text width=6 cm,text badly centered,outer sep = 2mm] {By Equation \eqref{eq:ind3a valuation}, each vertex $(m,\nu_3(a_{n,m}))$ \par lies strictly above the polygon \par for $1 \le m \le 3^n-1$.} -- (5.8,.3);
\fill (3.5,1.45) circle (2pt)
	(0,3) circle (2pt)
	(6,0) circle (2pt);
\draw[line width=1pt] (0,3) -- (6,0);
\node[text width=6cm,font=\scriptsize] at (10,1.7) {The number of lattice points $I$ on the interior of this triangle may be computed using Lemma \ref{lem:pick}: $I = A+1-B/2$.\par Here, \begin{align*}A &= \left(\frac{3^n(3^n-1)}{4}\right), \quad \text{and} \\ B &= \frac{3^n-1}{2}+3^n+1.\end{align*}};
\end{tikzpicture}
\caption{When $t\equiv 1 \pmod 3$, the $(x-1)$-Newton polygon is one sided.}\label{fig:ind3a figure}
\end{figure}

Note that for even values of $m$, the 3-adic valuation of $e_m(t)$ (and hence $a_{n,m}$) can be made arbitrarily large by taking appropriate values of $t$ congruent to 1 modulo 3. For the same values of $t$, the 3-adic valuation of $a_{n,m}$, where $m$ is odd, remains unchanged. Hence, as $t$ varies, the $(x-1)$-Newton polygon is dictated by the vertices with odd abscissae. In fact, it is enough to consider the abscissae that are powers of 3.

\begin{prop} \label{prop:x-1polygon}
If $t \equiv 1 \pmod 3$, then the $(x-1)$-Newton polygon is the lower convex hull of the set of points 
\begin{align*}
\left\{\Big(0,\nu_3(a_{n,0})\Big)\right\} \cup \left\{\left(3^k,n-k+\frac{3^n-3^k}{2}\right) \colon 1 \le k \le n\right\}.
\end{align*}
\end{prop}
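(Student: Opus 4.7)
The plan is to compute $\nu_3(a_{n,m})$ explicitly and then verify that the resulting point set has lower convex hull equal to the hull of the candidate set. From \eqref{eq:a_n,m} I need the two factors $\nu_3\binom{3^n}{m}$ and $\nu_3(e_m(t))$. For the first, Legendre's formula $\nu_3(k!)=(k-s_3(k))/2$ (with $s_3$ the base-$3$ digit sum) gives $\nu_3\binom{3^n}{m}=(s_3(m)+s_3(3^n-m)-1)/2$; a carry-count analysis of adding $m+(3^n-m)=3^n$ in base $3$ then shows $s_3(m)+s_3(3^n-m)=1+2(n-\nu_3(m))$, hence $\nu_3\binom{3^n}{m}=n-\nu_3(m)$ for $1\le m\le 3^n-1$. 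For the second factor, since $t\equiv 1\pmod 3$, a direct check of the six classes of $m\bmod 6$ shows $\nu_3(e_m(t))=0$ when $m$ is odd (as $e_m(t)\in\{t,1,t+1\}$) and $\nu_3(e_m(t))\ge 1$ when $m$ is even (as $e_m(t)\in\{2t+1,t-1,t+2\}$). Combined with $\lfloor(3^n-m)/2\rfloor=(3^n-m)/2$ for odd $m$, this pins down $\nu_3(a_{n,m})=n-\nu_3(m)+(3^n-m)/2$ for odd $m$; specializing to $m=3^k$ with $1\le k\le n$ recovers the claimed $y$-coordinate $n-k+(3^n-3^k)/2$.

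With exact valuations in hand, the main step is to show every scatter point $(m,\nu_3(a_{n,m}))$ with $m\notin\{0,3,9,\dots,3^n\}$ lies on or above the convex hull of the candidate set. The critical combinatorial observation is that for integers strictly between $3^k$ and $3^{k+1}$, the only one with $\nu_3=k$ is $2\cdot 3^k$, which is even; so any odd $m$ strictly between $3^k$ and $3^{k+1}$ satisfies $\nu_3(m)\le k-1$, giving $\nu_3(a_{n,m})\ge n-k+1+(3^n-m)/2$. A short calculation shows that the line segment joining $(3^k,y_k)$ to $(3^{k+1},y_{k+1})$ takes the value $n-k+(3^n-m)/2-(m-3^k)/(2\cdot 3^k)$ at $x=m$, so the scatter point exceeds it by at least $1+(m-3^k)/(2\cdot 3^k)>0$. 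For even $m$, the bonus $\nu_3(e_m(t))\ge 1$ together with the parity adjustment $\lfloor(3^n-m)/2\rfloor=(3^n-m-1)/2$ produces an analogous surplus. The candidate points themselves are in convex position because the slopes between consecutive $(3^k,y_k)$'s are $-\tfrac12-\tfrac{1}{2\cdot 3^k}$, monotonically increasing in $k$.

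The delicate part of the argument is near $m=0$, where the leftmost edge of the convex hull emerges from $(0,\nu_3(a_{n,0}))$. For small values of $V:=\nu_3(2t+1)$ the hull has an edge from $(0,\cdot)$ directly to $(3,y_1)$, but as $V$ grows the hull may skip forward to $(3^k,y_k)$ for some $k>1$, and the interior scatter points (in particular $m=1$, with exact valuation $n+(3^n-1)/2$) must be checked to lie on or above the appropriate edge. The main obstacle I expect is tabulating the slopes from $(0,\nu_3(a_{n,0}))$ to each $(3^k,y_k)$, identifying the most negative, and confirming that all scatter points left of the chosen vertex satisfy the inequality; this is precisely the bookkeeping that pins down the constant $V$ appearing in Theorem~\ref{th:ind3} and in the formula for $E$.
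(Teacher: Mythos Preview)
Your approach is the same as the paper's: compute $\nu_3(a_{n,m})$ exactly for odd $m$ and bound it below for even $m$ via $\nu_3\binom{3^n}{m}=n-\nu_3(m)$ together with the parity of $e_m(t)\bmod 3$, and then verify that each scatter point with $3^k<m<3^{k+1}$ lies strictly above the segment joining $(3^k,y_k)$ to $(3^{k+1},y_{k+1})$. Your write-up is more explicit than the paper's, which leaves the segment check as ``a straightforward calculation''; your observation that the only $m\in(3^k,3^{k+1})$ with $\nu_3(m)=k$ is the even integer $2\cdot 3^k$ is exactly the right way to organize it.

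Your unease about $m=1$ is justified and in fact exposes a slip in the statement rather than a gap in your argument. As printed, the candidate set starts at $k=1$ and omits the point $\bigl(1,\,n+(3^n-1)/2\bigr)=\bigl(3^0,y_0\bigr)$. But when $\nu_3(a_{n,0})>n+(3^n+1)/2$ the edge of that candidate hull from $(0,\nu_3(a_{n,0}))$ to $(3,y_1)$ has height $\tfrac{1}{3}(2\nu_3(a_{n,0})+y_1)$ at $x=1$, which exceeds $y_0$; so the Newton polygon is \emph{not} that hull. The paper itself uses the vertex at $x=1$: the example immediately following the proposition explicitly takes the hull of $\{(0,\nu_3(a_{2,0})),(1,6),(3,4),(9,0)\}$, and the third case in the proof of Theorem~\ref{th:ind3} has the leftmost side over $[0,1]$. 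The intended range is evidently $0\le k\le n$. With that correction your segment argument applies uniformly (the interval $(3^0,3^1)$ contains only $m=2$, handled by your even-$m$ bound), and nothing ``delicate'' remains: determining \emph{which} of the candidate vertices are extreme as $\nu_3(2t+1)$ varies is the business of Theorem~\ref{th:ind3}, not of this proposition. (Incidentally, the dependence runs opposite to what you wrote: larger $\nu_3(2t+1)$ pushes $(0,\nu_3(a_{n,0}))$ upward, so the first hull vertex after it moves \emph{leftward} through $3^n,3^{n-1},\dots,3,1$, not rightward.)
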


\begin{proof}
It is known that $\nu_3{3^n\choose m} = n-\nu_3(m)$ for each $0<m<3^n$, and from Equation \eqref{eq:a_n,m} we have
\begin{align} \label{eq:x-1polygon}
\nu_3(a_{n,m}) &= n-\nu_3(m)+\frac{3^n-m}{2} \quad \text{if $m$ is odd, and}\\
\nu_3(a_{n,m}) &\ge n-\nu_3(m)+\frac{3^n-m}{2}+1 \quad \text{if $m$ is even.}\nonumber
\end{align}
For each $m$ satisfying $3^k < m< 3^{k+1}$, it is a straightforward calculation to verify that the vertex $(m,\nu_3(a_{n,m}))$ lies strictly above the line segment joining $(3^k,\nu_3(a_{n,3^k}))$ and $(3^{k+1},\nu_3(a_{n,3^{k+1}}))$. See Figure \ref{fig:x-1polygon figure}. 
\end{proof}

\begin{figure}[!ht]
\centering
\begin{tikzpicture}
\draw (-.5,0) -- (4.5,0)
	(0,-.1) node[below]{$3^k$}--(0,.1)
	(4,-.1) node[below]{$3^{k+1}$} -- (4,.1)
	(2.5,-.1) node[below]{$m$} -- (2.5,.1);
\draw[dashed] (2.5,0) -- (2.5,2.5);
\draw[line width=1pt] (0,3) -- node[above,near start] {$S$} (4,1);
\fill (0,3) circle (2pt) node[left] {$\left(3^k,n-k+\frac{3^n-3^k}{2}\right)$}
	(4,1) circle (2pt) node[right] {$\left(3^{k+1},n-k-1+\frac{3^n-3^{k+1}}{2}\right)$} 
	(2.5,1.75) circle (2pt) node[below left] {$\left(m,n-k+\frac{3^n-m}{2}-\frac{m-3^k}{3^{k+1}-3^k}\right)$}
	(2.5,2.5) circle (2pt) node[above right,font=\scriptsize,text width=6.5cm] {By Equation \eqref{eq:x-1polygon}, $\nu_3(a_{n,m}) \ge n-k+\frac{3^n-m}{2}+1$ for each $3^k<m<3^{k+1}$.};
\end{tikzpicture}
\caption{Each vertex n the interval $(3^k,3^{k+1})$ lies above the side $S$.} \label{fig:x-1polygon figure}
\end{figure}
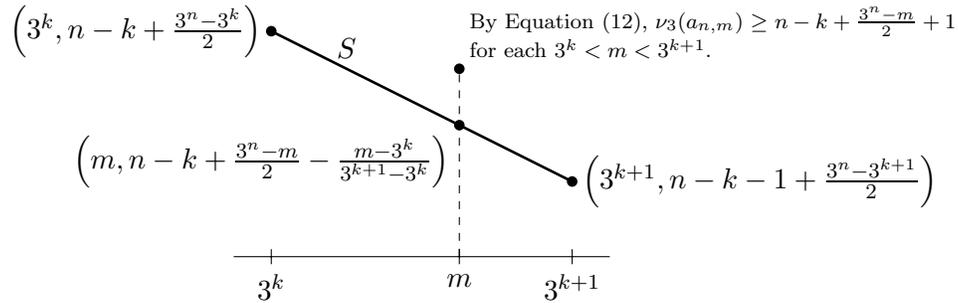

\begin{example}
The possible $(x-1)$-Newton polygons for $r_2(x,t)$ when $t \equiv 1 \pmod 3$ are shown in Figure \ref{fig:x-1 example}. By Proposition \ref{prop:x-1polygon}, the polygon is the lower convex hull of the points $\{(0,\nu_3(a_{2,0})),$ $(1,6), (3,4), (9,0)\}$. By Equation \eqref{eq:a_n,m}, $\nu_3(a_{2,0}) \ge 5$. The lower convex hull of the points $\{ (1,6), (3,4),$ $(9,0)\}$ is marked by a dashed line.
\end{example}

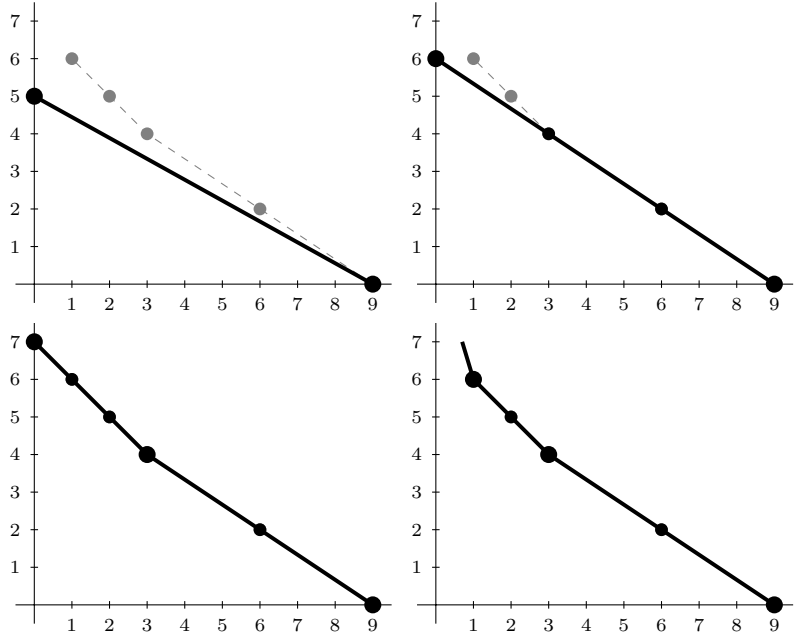
\begin{figure}[!ht]
\centering
\begin{tikzpicture}[scale=.5]
\draw (-.5,0) -- (9.5,0) 
	(0,-.5) -- (0,7.5);
\draw[gray,dashed] (9,0) -- (3,4) -- (1,6);
\foreach \x in {1,...,9}
	\draw (\x,-.1) node[below,font=\tiny] {$\x$} -- (\x,.1);
\foreach \x in {1,...,7}
	\draw (-.1,\x) node[left,font=\tiny] {$\x$} -- (.1,\x);
\foreach \x/\y in {1/6,2/5,3/4,6/2,9/0}
	\draw (\x,\y) node[gray,fill,circle,inner sep=.6mm] {};
\draw[line width=1.5pt] (9,0) -- (0,5);
\foreach \x/\y in {9/0,0/5}
	\draw (\x,\y) node[fill,circle,inner sep=.8mm] {};
\end{tikzpicture}
\begin{tikzpicture}[scale=.5]
\draw (-.5,0) -- (9.5,0) 
	(0,-.5) -- (0,7.5);
\draw[gray,dashed] (9,0) -- (3,4) -- (1,6);
\foreach \x in {1,...,9}
	\draw (\x,-.1) node[below,font=\tiny] {$\x$} -- (\x,.1);
\foreach \x in {1,...,7}
	\draw (-.1,\x) node[left,font=\tiny] {$\x$} -- (.1,\x);
\foreach \x/\y in {1/6,2/5,3/4,6/2,9/0}
	\draw (\x,\y) node[gray,fill,circle,inner sep=.6mm] {};
\draw[line width=1.5pt] (9,0) -- (0,6);
\foreach \x/\y in {9/0,0/6}
	\draw (\x,\y) node[fill,circle,inner sep=.8mm] {};
\foreach \x/\y in {3/4,6/2}
	\draw (\x,\y) node[fill,circle,inner sep=.6mm] {};
\end{tikzpicture}

\begin{tikzpicture}[scale=.5]
\draw (-.5,0) -- (9.5,0) 
	(0,-.5) -- (0,7.5);
\draw[gray,dashed] (9,0) -- (3,4) -- (1,6);
\foreach \x in {1,...,9}
	\draw (\x,-.1) node[below,font=\tiny] {$\x$} -- (\x,.1);
\foreach \x in {1,...,7}
	\draw (-.1,\x) node[left,font=\tiny] {$\x$} -- (.1,\x);
\foreach \x/\y in {1/6,2/5,3/4,6/2,9/0}
	\draw (\x,\y) node[gray,fill,circle,inner sep=.6mm] {};
\draw[line width=1.5pt] (9,0) -- (3,4) -- (0,7);
\foreach \x/\y in {9/0,3/4,0/7}
	\draw (\x,\y) node[fill,circle,inner sep=.8mm] {};
\foreach \x/\y in {6/2,2/5,1/6}
	\draw (\x,\y) node[fill,circle,inner sep=.6mm] {};
\end{tikzpicture}
\begin{tikzpicture}[scale=.5]
\draw (-.5,0) -- (9.5,0) 
	(0,-.5) -- (0,7.5);
\draw[gray,dashed] (9,0) -- (3,4) -- (1,6);
\foreach \x in {1,...,9}
	\draw (\x,-.1) node[below,font=\tiny] {$\x$} -- (\x,.1);
\foreach \x in {1,...,7}
	\draw (-.1,\x) node[left,font=\tiny] {$\x$} -- (.1,\x);
\foreach \x/\y in {1/6,2/5,3/4,6/2,9/0}
	\draw (\x,\y) node[gray,fill,circle,inner sep=.6mm] {};
\draw[line width=1.5pt] (9,0) -- (3,4) -- (1,6) -- (.7,7);
\foreach \x/\y in {9/0,3/4,1/6}
	\draw (\x,\y) node[fill,circle,inner sep=.8mm] {};
\foreach \x/\y in {6/2,2/5}
	\draw (\x,\y) node[fill,circle,inner sep=.6mm] {};
\end{tikzpicture}
\caption{Possible $(x-1)$-Newton polygons for $r_2(x,t)$ when $t \equiv 1 \pmod 3$.} \label{fig:x-1 example}
\end{figure}

\begin{thm} \label{th:ind3}
Suppose $t \equiv 1 \pmod 3$. Set $V = \min\{\nu_3(a_{n,0}) - \frac{3^n+1}{2},n\}$. Then
\begin{align*}
\ind_3 r_n(x,t) = \frac{1}{4}\left((3^n-1)^2+2V+2\sum_{k=0}^{V-1}3^{n-k}\right).
\end{align*}
\end{thm}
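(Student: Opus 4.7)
The plan is to apply the Montes algorithm (Theorem~\ref{th:gmn}). By Lemma~\ref{lem:rmodl}, $r_n(x,t) \equiv (x-1)^{3^n} \pmod 3$, so the only factor to consider is $\phi = x-1$. By Proposition~\ref{prop:x-1polygon}, the $(x-1)$-Newton polygon is the lower convex hull of $(0, W)$ with $W = \nu_3(a_{n,0})$ and the points $P_k = (3^k,\,n-k+(3^n-3^k)/2)$ for $1 \le k \le n$. Once the polygon has been described, $\ind_3 r_n(x,t)$ equals (provided $r_n$ is $3$-regular at $\phi$) the number of lattice points strictly in the positive quadrant lying on or below the polygon.

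The first step is to identify which of the candidate points are actually vertices. A direct computation gives slope $-1/2 - 1/(2\cdot 3^{k-1})$ from $P_{k-1}$ to $P_k$, which is strictly increasing in $k$, so the $P_k$'s lie in convex position. Comparing with the slope from $(0,W)$ to $P_k$ and using $W = V + (3^n+1)/2$, one concludes that $P_k$ is a vertex of the polygon precisely when $k \ge n - V + 1$. Thus for $V \ge 1$ the polygon has $V$ sides: the first side (from $(0,W)$ to $P_{n-V+1}$) has degree~$3$ and the remaining sides have degree~$2$. When $V = 0$ the polygon is the single segment from $(0,W)$ to $(3^n, 0)$, of degree~$1$.

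Next I would count lattice points using Pick's theorem (Lemma~\ref{lem:pick}). Instead of computing the area directly, the cleanest approach is a recursion: by comparing the $V$- and $(V-1)$-polygons, one checks that they agree on $[3^{n-V+1}, 3^n]$ (both have the same slope on the final segment) and that their difference on $[0, 3^{n-V+1}]$ is the linear function $1 - x/3^{n-V+1}$. Consequently the enclosed area satisfies $A_V - A_{V-1} = 3^{n-V+1}/2$; combined with $A_0 = 3^n(3^n+1)/4$ this gives a closed form for $A_V$. Together with the boundary count $B_N = 2V+2$ on the polygon and a standard Pick's computation that subtracts off the $3^n + W + 1$ lattice points on the axes, one obtains
\[
L_\phi = A_V + \tfrac{1}{2}(B_N - W - 3^n - 1),
\]
which simplifies to $\tfrac14\bigl((3^n-1)^2 + 2V + 2\sum_{k=0}^{V-1} 3^{n-k}\bigr)$ upon substituting $W = V + (3^n+1)/2$.

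The final step is to verify $3$-regularity so that Theorem~\ref{th:gmn} applies with equality. On each middle side from $P_{j-1}$ to $P_j$ the interior lattice point is $(2\cdot 3^{j-1},(y_{j-1}+y_j)/2)$; the explicit formula in Equation~\eqref{eq:a_n,m} combined with $\nu_3(2t+1) = V+1$ and $\nu_3\binom{3^n}{m} = n-\nu_3(m)$ yields $\nu_3(a_{n,2\cdot 3^{j-1}}) = (y_{j-1}+y_j)/2 + V+1$, so this point lies strictly above the polygon and the linear residual coefficient vanishes, leaving a quadratic $c_0 + c_2 y^2$ with $c_0, c_2$ nonzero in $\ff_3$, which is always separable. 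A similar analysis on the first degree-$3$ side shows that the coefficient of $y^2$ likewise vanishes, producing a cubic $c_0 + c_1 y + c_3 y^3$ whose derivative $c_1$ is a nonzero constant in $\ff_3$, hence separable. This is the step I expect to be the main obstacle: the regularity check requires careful $3$-adic bookkeeping at several intermediate coefficients, using the piecewise definition of $e_m(t)$ and Kummer's formula, and in the degenerate case $\nu_3(a_{n,0}) > n + (3^n+1)/2$ the actual Newton polygon can acquire extra low-order vertices at $x = 1$ and $x=2$ that must be shown to leave the final lattice count unchanged.
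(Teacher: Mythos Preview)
Your plan is essentially the paper's proof: Proposition~\ref{prop:x-1polygon} to locate the polygon vertices, a case split on $V$ to determine the sides and their degrees, a separability check on each residual polynomial, and Pick's theorem for the lattice count. Your recursive area computation $A_V-A_{V-1}=3^{n-V+1}/2$ is equivalent to the paper's closed form $A=\tfrac{3^n(3^n+1)}{4}+\tfrac12\sum_{k=0}^{V-1}3^{n-k}$, and your regularity argument (the middle residual coefficient vanishing on each side because the even-abscissa point sits strictly above the polygon) matches the paper's.

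On the loose end you flagged: the degenerate case $\nu_3(a_{n,0})>n+\tfrac{3^n+1}{2}$ is exactly the paper's third case. There the polygon acquires one extra vertex at $x=1$ (not at $x=2$; one checks $\nu_3(a_{n,2})=\nu_3(a_{n,1})$ since $\nu_3(t-1)=1$ whenever $\nu_3(2t+1)\ge 2$, and this point lies strictly above the segment from $(1,\nu_3(a_{n,1}))$ to $P_1$). The new leftmost side over $[0,1]$ has degree~$1$, so regularity is automatic there. Because the polygon at $x\ge 1$ coincides with the boundary-case $V=n$ polygon of case~2, the lattice count in the open quadrant is unchanged, and the same formula holds. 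Also note that your identity $\nu_3(2t+1)=V+1$ is only an equality in the non-degenerate range; in general $\nu_3(2t+1)\ge V+1$, but only the inequality is needed for the regularity step.
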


\begin{proof}
Since $t \equiv 1 \pmod 3$, we have $\nu_3(a_{n,0}) \ge \frac{3^n+1}{2}$ by Equation \eqref{eq:a_n,m}. There are three possibilities for the shape of the polygon, depending on the $3$-adic valuation of $a_{n,0}$.

If $\nu_3(a_{n,0})= \frac{3^n+1}{2}$, then by Proposition \ref{prop:x-1polygon}, the $(x-1)$-Newton polygon has a single side joining the vertices $\left( 0 , \frac{3^n+1}{2}\right)$ and $(3^n,0)$. The degree of the associated residual polynomial is 1.

If $\frac{3^n+1}{2} < \nu_3(a_{n,0}) \le \frac{3^n+1}{2}+n$, then the polygon has $V$ sides. The leftmost side, which lies above the interval $[0,3^{n-V+1}]$, is degree 3, as it passes through lattice points at $x=3^{n-V}$ and $x=2\cdot 3^{n-V}$. From the proof of Proposition \ref{prop:x-1polygon}, the point $\left(2\cdot3^{n-V},\nu_3(a_{n,2\cdot 3^{n-V}})\right)$ lies above the polygon. Hence the residual coefficient attached to this point is 0, and the residual polynomial associated to this side is of the form $y^3\pm y \pm 1$. The degree of each subsequent side is 2, and by the same analysis, the residual polynomials associated to these sides are of the form $y^2\pm1$.

If $\nu_3(a_{n,0})>\frac{3^n+1}{2}$, then the polynomial is comprised of $V+1$ sides. The leftmost side lies above the interval $[0,1]$ and is degree 1. All other sides are degree 2 and have residual polynomials of the form $y^2\pm1$.

In all three cases, the residual polynomials are separable over $\ff_3[x]$, hence $r_n(x,t)$ is 3-regular. By Theorem \ref{th:gmn} and Lemma \ref{lem:pick}, we have
\begin{align} \label{eq:ind3}
\ind_3 r_n(x,t) = A+1-B/2 + 2V
\end{align}
where $A$ is the area of the region in the first quadrant below the polygon, $B$ is the number of lattice points on the boundary of this region, and $2V$ is the number of lattice points on the $(x-1)$-Newton polygon. We compute $A$ by breaking the region into triangles with their bases along the $y$-axis to obtain
\begin{align*}
& A = \frac{3^n(3^n+1)}{4} + \frac{1}{2}\sum_{k=0}^{V-1}3^{n-k} \quad \text{and} \quad B = \frac{3^n+1}{2} + 3^n + 3V + 1.
\end{align*}
Substituting into Equation \eqref{eq:ind3}, we obtain the result. 
\end{proof}

\begin{example}
Consider, again, the polynomial $r_2(x,t)$ when $t \equiv 1 \pmod 3$. It is enough to consider the polygons where $5 \le \nu_3(a_{2,0}) \le 7$ since larger $3$-adic valuations of $a_{2,0}$ do not affect the number of lattice points under the polygon. In order to compute the area under the polygon, we break the region into one large triangle and $V$ smaller triangles, where $V = \min\{\nu_3(a_{2,0})-5,2\}$. By construction, the smaller triangles have area $3^k/2$, and the large triangle has area $3^n(3^n+1)/4$. See Figure \ref{fig:ind3 proof}.
\end{example}

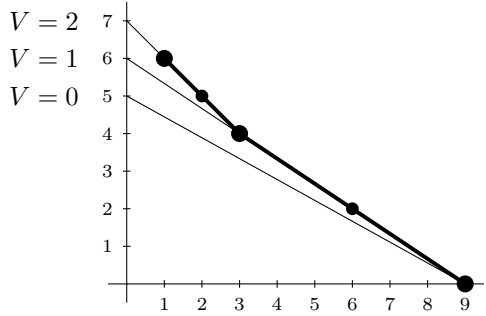
\begin{figure}[!ht]
\centering
\begin{tikzpicture}[scale=.5]
\draw (-.5,0) -- (9.5,0) 
	(0,-.5) -- (0,7.5);
\draw[line width = 1.5pt] (9,0) -- (3,4) -- (1,6);
\foreach \x in {1,...,9}
	\draw (\x,-.1) node[below,font=\tiny] {$\x$} -- (\x,.1);
\foreach \x in {1,...,7}
	\draw (-.1,\x) node[left,font=\tiny] {$\x$} -- (.1,\x);
\foreach \x/\y in {1/6,2/5,3/4,6/2,9/0}
	\draw (\x,\y) node[fill,circle,inner sep=.6mm] {};
\draw (9,0) -- (0,5) (3,4) -- (0,6) (1,6)--(0,7);
\foreach \x/\y in {9/0,3/4,1/6}
	\draw (\x,\y) node[fill,circle,inner sep=.8mm] {};
\foreach \y in {0,1,2}
	\draw (-1,\y+5) node[font=\small,anchor=east] {$V=\y$};
\end{tikzpicture}
\caption{The region under the $(x-1)$-Newton polygon of $r_2(x,t)$.} \label{fig:ind3 proof}
\end{figure}

\subsection{Index calculation: $p \neq 3$}
Recall from Lemma \ref{lem:rmodp} that if $p \mid t^2+t+1$, then $r_n(x,t) \equiv (x-t)^{3^n} \pmod p$. Once again, we may use Taylor expansion to determine the $(x-t)$-development:
\begin{align*} 
r_n(x,t) = \sum_{m=0}^{3^n}a_{n,m}(x-t)^m, \quad \text{ where } \quad a_{n,m} = \frac{r_n^{(m)}(t)}{m!}.
\end{align*}
By Proposition \ref{prop:genrik}, it follows that
\begin{align*}
r_n^{(m)}(t,t) = \frac{3^n!}{(3^n-m)!}\frac{(t-\zeta)(t-\zeta\inv)^{3^n-m}-(t-\zeta\inv)(t-\zeta)^{3^n-m}}{\zeta\inv-\zeta}. \\
\intertext{Hence for $0 \le m < 3^n$,}
a_{n,m} = {3^n\choose m}(t^2+t+1)\frac{(t-\zeta\inv)^{3^n-m-1}-(t-\zeta)^{3^n-m-1}}{\zeta\inv-\zeta}.
\end{align*}
The $p$-adic valuation of these coefficients is
\begin{align} \label{eq:pa_n,m}
\nu_p(a_{n,m}) = \nu_p{3^n \choose m}+\nu_p(t^2+t+1)
\end{align}
since $t \equiv \zeta^{\pm1}\pmod p$ and
\begin{align*}
\frac{(t-\zeta\inv)^{3^n-m-1}-(t-\zeta)^{3^n-m-1}}{\zeta\inv-\zeta} \equiv \pm (\zeta\inv-\zeta)^{3^n-m-2}\equiv \pm(\sqrt{-3})^{3^n-m-2} \not\equiv 0 \pmod p.
\end{align*}
Since $\nu_3(a_{n,m})\ge \nu_3(a_{n,0}) = \nu_3(t^2+t+1)$ for each $0<m<3^n$, it follows that the $(x-t)$-Newton polygon comprised of a single side with endpoints $(0,\nu_p(t^2+t+1))$ and $(3^n,0)$.

\begin{thm} \label{th:indp}
Suppose $p \mid t^2+t+1$, and let $v = \nu_p(t^2+t+1)$. Then
\begin{align*}
\ind_p r_n(x,t) =\frac{1}{2}\Big((3^n-1)(v-1) + \gcd(3^n,v)-1\Big)
\end{align*}
\end{thm}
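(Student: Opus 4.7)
The plan is to apply Theorem \ref{th:gmn} with $\phi(x) = x - t$, which by Lemma \ref{lem:rmodp} is the unique irreducible factor of $r_n(x,t)$ modulo $p$. The paragraph preceding the theorem establishes, via Equation \eqref{eq:pa_n,m}, that the $(x-t)$-Newton polygon $\eucal N_\phi^-$ consists of a single side $S$ joining $(0,v)$ and $(3^n,0)$. Set $d = \gcd(3^n,v)$, so that $\deg S = d$ and the lattice points on $S$ are precisely $\bigl(j \cdot 3^n/d,\, (d-j)v/d\bigr)$ for $0 \le j \le d$.

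First I would verify that $r_n(x,t)$ is $p$-regular. For $0 < j < d$, Equation \eqref{eq:pa_n,m} gives $\nu_p(a_{n,\,j \cdot 3^n/d}) \ge v > (d-j)v/d$, so the residual coefficient at this interior lattice point of $S$ is zero. Hence the residual polynomial has the sparse form $R_S(y) = c_d y^d + c_0$ with $c_0, c_d \in \ff_p^\times$. Since $d \mid 3^n$ is a power of $3$ and $p \ne 3$, we have $d \not\equiv 0 \pmod p$; then $R_S'(y) = d c_d y^{d-1}$ together with $R_S(0) = c_0 \ne 0$ forces $\gcd(R_S, R_S') = 1$, so $R_S$ is separable.

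With regularity in hand, Theorem \ref{th:gmn} reduces the problem to computing $L_\phi$: the number of integer lattice points $(x,y)$ with $x, y > 0$ lying on or below $S$. I would apply Pick's Theorem (Lemma \ref{lem:pick}) to the closed triangle with vertices $(0,0)$, $(3^n,0)$, $(0,v)$. The area is $A = 3^n v/2$, and the number of boundary lattice points is $B = 3^n + v + d$ (the hypotenuse contains $d+1$ lattice points, with the two endpoints shared with the legs). Pick's Theorem then yields
\begin{align*}
I = A + 1 - B/2 = \frac{(3^n - 1)(v - 1) + 1 - d}{2}.
\end{align*}
Adding the $d - 1$ lattice points on the open hypotenuse (the $d + 1$ on $S$ minus the two endpoints on the axes) gives
\begin{align*}
\ind_p r_n(x,t) \;=\; L_\phi \;=\; I + (d - 1) \;=\; \frac{(3^n-1)(v-1) + \gcd(3^n,v) - 1}{2},
\end{align*}
as claimed. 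The only delicate step is the regularity check: without the standing hypothesis $p \ne 3$ of this subsection, $p$ could divide $d$ and $R_S$ would fail to be separable, so that Theorem \ref{th:gmn} would only supply a lower bound on $\ind_p$.
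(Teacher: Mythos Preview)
Your proof is correct and follows essentially the same approach as the paper's: a single-sided $(x-t)$-Newton polygon, a sparse residual polynomial $c_dy^d+c_0$ shown separable because $p\nmid d$, and Pick's Theorem for the lattice-point count. The paper compresses the separability check into the discriminant formula $\disc(y^d+c)=d^dc^{d-1}$ and leaves the Pick computation implicit, whereas you spell out both; the substance is identical.
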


\begin{proof}
We have just shown that the $(x-t)$-Newton polygon consists of a single side. By definition, the degree of this side is $\gcd(3^n,v)$. It follows from Equation \eqref{eq:pa_n,m} that the residual polynomial is of the form $y^{\gcd(3^n,v)}+c$, as all the intermediary residual coefficients are 0. It is a well known fact that $\disc(y^d+c) = d^dc^{d-1}$. Thus the residual polynomial is separable over $\ff_p$, as $\disc(y^{\gcd(3^n,v)}+c)$ is relatively prime to $p$. The result then follows by Theorem \ref{th:gmn} and an application of Lemma \ref{lem:pick}.
\end{proof}

The formula for the index in Theorem \ref{th:maine index} now follows from Theorems \ref{th:ind3a}, \ref{th:ind3}, and \ref{th:indp}, and the equation for the field discrinint follows from Equations \eqref{eq:disc} and \eqref{eq:disc relation} when $\ell=3$ and the base field is $\qq$.

\section{Proof of Theorem \ref{th:maine decomposition}} \label{sec:primes}
Throughout this section, we assume that $\ell$ is an odd prime, but otherwise we maintain our original notation: $\zeta$ is a $\ell$-th root of unity, $\zeta^+ = \zeta+\zeta\inv$, and $\{K_n;t\}_{n=1}^\infty$ is a specialized tower over $K = \qq(\zeta^+)$. 

Our first goal is to describe the graphs of $\varphi := \varphi(x;\ell)$ over finite fields. If $\ff_q$ is a finite field whose cardinality is congruent to 1 modulo $\ell$, then the graph of $\varphi$ over $\pp\ff_q := \ff_q \cup \{\infty\}$ has an unusually regular structure. (See again Figure \ref{fig:g(127)}.) It is this regularity that will allow us to give the decomposition of any prime of norm $q$ explicitly. We should point out that this method can also be used on a case-by-case basis to determine the decomposition of prime whose norms are not congruent to 1 modulo $\ell$. However, to obtain a theorem similar to Theorem \ref{th:maine decomposition}, one would need to understand how the graph of $\varphi$ over $\pp\ff_q$ sits inside the graph of $\varphi$ over $\pp\ff_q(\zeta)$. The case we consider is just the simplest case, as $\ff_q = \ff_q(\zeta)$ when $q \equiv 1 \pmod \ell$.

We remind the reader that the graphs are constructed to reflect the action of $\varphi$ over the finite field. That is, the graph contains an edge from $a$ to $b$ if and only if $\varphi(a) = b$. In particular, one can recover all the roots of $\varphi^n(x;\ell)-t \pmod p$ by constructing the graph of $\varphi$ over a sufficiently large field of characteristic $p$, then tracing back along all paths of length $n$ that terminate at $t$. This observation translates to the decomposition of primes in the following way.

Suppose $p$ be a prime that does not divide $\ind r_n(x,t;\ell)$, and let $\fr p \subset \ring_K$ be a prime lying over $p$ of norm $q$, i.e. $\ring_K/\fr p \cong \ff_q$. If $q \equiv 1 \pmod \ell$, then our precise understanding of the graphs yields the factorization of $\varphi^n(x;\ell)-t$ over $\ff_q[x]$. From there, we appeal to Dedekind (\cite[Theorem 2.3.9]{c00}), which associates the irreducible factors of $\varphi^n(x;\ell)-t$ to the primes over $\fr p$ in $K_n$.

We now describe the action of $\varphi$ over an arbitrary finite field $\ff$. For any element $a \in \pp\ff-\{\zeta,\zeta\inv\}$, define
\begin{align*}
\beta_a = \frac{a-\zeta}{a-\zeta\inv} \in \ff(\zeta)^\times,
\end{align*}
where $\beta_\infty = 1$. Let $\ord\beta_a$ denote the order of $\beta_a$ in $\ff(\zeta)^\times$, and write $\ord\beta_a = \ell^\lambda d$ where $\gcd(\ell,d) = 1$. Set $\ord_d(\ell)$ to be the order of $\ell$ in $(\zz/d\zz)^\times$. 

\begin{prop} \label{prop:orbit}
If $a \in \pp\ff - \{\zeta,\zeta\inv\}$ and $\ord\beta_a = \ell^\lambda d$, where $\gcd(\ell,d) = 1$, then $\pper(a) = \lambda$ and $\period(\varphi^\lambda(a)) = \ord_d(\ell)$.
\end{prop}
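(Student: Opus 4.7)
The plan is to conjugate the dynamics of $\varphi$ into the simpler multiplicative action of the $\ell$-th power map on $\ff(\zeta)^\times$, via the fractional-linear change of variable $a \mapsto \beta_a = (a-\zeta)/(a-\zeta\inv)$, and then read off period and preperiod from the factorization $\ord\beta_a = \ell^\lambda d$.

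First I would establish the fundamental identity
\begin{align*}
\beta_{\varphi(a)} = \beta_a^\ell
\end{align*}
for all $a \in \pp\ff - \{\zeta, \zeta\inv\}$. This is essentially immediate from the two formulas for $\varphi(x) - \zeta$ and $\varphi(x) - \zeta\inv$ derived at the start of the proof of Proposition \ref{prop:genrik}: their common denominator cancels in the ratio, leaving $(a-\zeta)^\ell/(a-\zeta\inv)^\ell = \beta_a^\ell$. The point $a = \infty$ is handled by the convention $\beta_\infty = 1$ together with the observation that $\varphi(\infty) = \infty$ (since $\deg P = \ell > \deg Q = \ell - 1$).

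Next I would verify two small bookkeeping facts: that $a \mapsto \beta_a$ is injective on $\pp\ff - \{\zeta, \zeta\inv\}$ (a one-line manipulation of $(a-\zeta)(b-\zeta\inv) = (a-\zeta\inv)(b-\zeta)$ yields $a(\zeta-\zeta\inv) = b(\zeta-\zeta\inv)$, and $\zeta \neq \zeta\inv$ since $\ell > 2$), and that $\varphi$ preserves $\pp\ff - \{\zeta, \zeta\inv\}$ (the same formulas show that $\varphi(a) = \zeta$ forces $a = \zeta$, and similarly for $\zeta\inv$). Iterating the fundamental identity then gives $\beta_{\varphi^n(a)} = \beta_a^{\ell^n}$, so by injectivity
\begin{align*}
\varphi^n(a) = a \iff \beta_a^{\ell^n - 1} = 1 \iff \ord(\beta_a) \mid \ell^n - 1.
\end{align*}

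Writing $\ord(\beta_a) = \ell^\lambda d$ with $\gcd(\ell,d)=1$, and using $\gcd(\ell,\ell^n-1)=1$, the divisibility $\ell^\lambda d \mid \ell^n-1$ forces $\lambda = 0$, and then the smallest such $n$ is $\ord_d(\ell)$. For the preperiod of a general $a$, I would compute
\begin{align*}
\ord(\beta_{\varphi^k(a)}) = \ord(\beta_a^{\ell^k}) = \ell^{\max(\lambda-k,\,0)}\,d,
\end{align*}
which is coprime to $\ell$ exactly when $k \ge \lambda$. Hence $\varphi^\lambda(a)$ is the first iterate of $a$ that is periodic, giving $\pper(a) = \lambda$, and applying the periodic case to $\varphi^\lambda(a)$ (whose $\beta$-order is $d$) yields $\period(\varphi^\lambda(a)) = \ord_d(\ell)$.

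I do not foresee a substantive obstacle: the only real content is the semiconjugacy $\beta_{\varphi(a)} = \beta_a^\ell$, which is essentially already visible in the formulas used to prove Proposition \ref{prop:genrik}, and the rest is arithmetic in a finite cyclic group together with careful handling of the three excluded base points $\{\zeta, \zeta\inv, \infty\}$.
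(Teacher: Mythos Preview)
Your proof is correct and follows essentially the same approach as the paper: both reduce the question to the relation $\beta_a^{\ell^m(\ell^n-1)} = 1$ via the conjugacy between $\varphi$ and the $\ell$-th power map under $a\mapsto\beta_a$. Your presentation is a bit more explicit---isolating the semiconjugacy $\beta_{\varphi(a)} = \beta_a^\ell$ and the injectivity of $a\mapsto\beta_a$ as separate steps---whereas the paper reaches the same relation in one stroke by directly expanding $\varphi^{m+n}(a)-\varphi^m(a)$ using the closed form for $\varphi^n$ and then appealing to minimality.
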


\begin{proof}
Let $a \in \ff$, and suppose $\pper(a) = m$ and $\per(\varphi^m(a)) = n$. Then
\begin{align*}
0 &= \varphi^{m+n}(a) - \varphi^m(a) \\
&= \frac{\zeta\inv(x-\zeta)^{\ell^{m+n}}-\zeta(a-\zeta\inv)^{\ell^{m+n}}}{(a-\zeta)^{\ell^{m+n}}-(a-\zeta\inv)^{\ell^{m+n}}} - \frac{\zeta\inv(a-\zeta)^{\ell^{m}}-\zeta(a-\zeta\inv)^{\ell^{m}}}{(a-\zeta)^{\ell^{m}}-(a-\zeta\inv)^{\ell^{m}}},
\end{align*}
which is equivalent to 
\begin{align*}
(\zeta-\zeta\inv)\Big((a-\zeta)(a-\zeta\inv)\Big)^{\ell^m}\left((a-\zeta)^{\ell^m(\ell^n-1)}-(a-\zeta\inv)^{\ell^m(\ell^n-1)}\right)=0.
\end{align*}
Since $a \not \in\{\zeta,\zeta\inv\}$, we conclude that
\begin{align*}
\left(\frac{a-\zeta}{a-\zeta\inv}\right)^{\ell^m(\ell^n-1)}=1.
\end{align*}
By the minimality of $m$ and $n$, it follows that $m = \lambda$ and $n = \ord_d(\ell)$.
\end{proof}

In particular, if $\ff(\zeta) = \ff$, then the action of $\varphi$ over $\pp\ff$ is completely determined by the orders of the elements in the cyclic group $\ff^\times$.

\begin{thm} \label{th:maine graph}
Let $q$ be a prime power congruent to 1 modulo $\ell$, and write $q-1 = \ell^\lambda\omega$. The graph of $\varphi(x;\ell)$ over $\pp\ff_q$ contains two fixed points, $\zeta$ and $\zeta\inv$, and $\omega$ other periodic points that are arranged into cycles as follows. For each divisor $d$ of $\omega$, there are $\phi(d)$ periodic points of period $\ord_d(\ell)$, where $\phi$ denotes the Euler totient function. Additionally, the graph contains $\omega(\ell-1)\ell^{k-1}$ points of preperiod $k$ for each $1\le k \le \lambda$. 
\end{thm}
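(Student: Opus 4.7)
The plan rests on converting the dynamics of $\varphi$ on $\pp\ff_q \setminus \{\zeta, \zeta\inv\}$ into multiplicative data in $\ff_q^\times$ via the M\"obius transformation $a \mapsto \beta_a = (a-\zeta)/(a-\zeta\inv)$ introduced just before Proposition~\ref{prop:orbit}. First I would observe that the hypothesis $q \equiv 1 \pmod{\ell}$ places $\zeta$ and $\zeta\inv$ in $\ff_q$, so both $\beta$ and Proposition~\ref{prop:orbit} are available directly over $\ff_q$. Substituting $x = \zeta$ into the defining identities for $P$ and $Q$ (or into Proposition~\ref{prop:genrik} with $n=1$) gives $\varphi(\zeta;\ell) = \zeta$ and $\varphi(\zeta\inv;\ell) = \zeta\inv$, verifying the two distinguished fixed points.

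Second, I would note that $\beta$ is a M\"obius transformation $\pp\ff_q \to \pp\ff_q$ sending $\zeta \mapsto 0$, $\zeta\inv \mapsto \infty$, and $\infty \mapsto 1$; it therefore restricts to a bijection $\pp\ff_q \setminus \{\zeta, \zeta\inv\} \to \ff_q^\times$. Since $\ff_q^\times$ is cyclic of order $q - 1 = \ell^\lambda \omega$, for each divisor $m$ of $q-1$ it contains exactly $\phi(m)$ elements of order $m$, and every such $m$ factors uniquely as $\ell^k d$ with $0 \le k \le \lambda$, $d \mid \omega$, and $\gcd(\ell, d) = 1$.

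Third, I would invoke Proposition~\ref{prop:orbit} to translate orders into dynamical data: a point $a$ with $\ord \beta_a = \ell^k d$ satisfies $\pper(a) = k$ and $\period(\varphi^k(a)) = \ord_d(\ell)$. The theorem then follows by aggregation. For $k = 0$, each $d \mid \omega$ contributes $\phi(d)$ periodic points of period $\ord_d(\ell)$, totaling $\sum_{d \mid \omega}\phi(d) = \omega$ periodic points lying outside $\{\zeta, \zeta\inv\}$; the $d = 1$ case corresponds to $\infty = \beta\inv(1)$. For each $1 \le k \le \lambda$, summing $\phi(\ell^k d) = \phi(\ell^k)\phi(d)$ over $d \mid \omega$ yields
\[
\phi(\ell^k) \sum_{d \mid \omega} \phi(d) = (\ell-1)\ell^{k-1}\omega
\]
points of preperiod $k$, as claimed.

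Since the heavy lifting is already done by Proposition~\ref{prop:orbit} together with the cyclic structure of $\ff_q^\times$, the argument is essentially a bookkeeping exercise; I foresee no genuine obstacle beyond correctly reindexing the count of elements of $\ff_q^\times$ by the factorization $m = \ell^k d$ and summing over $d$.
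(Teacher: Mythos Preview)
Your proposal is correct and follows essentially the same approach as the paper: both arguments use the bijection $a \mapsto \beta_a$ from $\pp\ff_q \setminus \{\zeta,\zeta\inv\}$ onto $\ff_q^\times$, invoke the cyclic structure of $\ff_q^\times$, and then apply Proposition~\ref{prop:orbit}. Your write-up simply spells out the bookkeeping (verifying the two fixed points and summing $\phi(\ell^k d)$ over $d\mid\omega$) that the paper leaves implicit.
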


\begin{proof}
The map $f \colon (\pp\ff_q-\{\zeta,\zeta\inv\}) \to \ff_q^\times$ defined by $f(a) = \beta_a$ is an isomorphism. (The inverse $f\inv \colon \ff_q^\times \to (\pp\ff_q-\{\zeta,\zeta\inv\})$ is given by $f\inv(\beta) = (\beta\zeta\inv-\zeta)/(\beta-1)$, where $f\inv(1) = \infty$.) Since $\ff_q^\times$ is a cyclic group of order $q-1$, the orders of the $\beta_a$'s are completely determined by the divisors of $q-1$, and the result follows from Proposition \ref{prop:orbit}.
\end{proof}

\begin{remark}
Since the number of preimages cannot exceed the degree of $\varphi$, it follows from a simple point count that if $a \in \pp\ff_q-\{\zeta,\zeta\inv\}$ and $\pper(a) < \lambda$, then the graph contains exactly $\ell$ distinct preimages of $a$. As a result, the branching structure in each component of the graphs is uniform in every direction.
\end{remark}

\begin{example}
By Theorem \ref{th:maine graph}, the graph of $\varphi(x;3)$ over $\pp\ff_{127}$ (shown in Figure \ref{fig:g(127)}) is determined by the divisors of 126, as shown in the table in Figure \ref{fig:table G(3,127)}.
\end{example}

\begin{itsapicture}
\begin{figure}[!ht]
\begin{tabular}{|c|c|c|c|}
\hline
Divisor of 126 & Number of elements & Period & Preperiod \\
$d$ & $\phi(d)$ & $\ord_d(\ell)$ if $\gcd(\ell,d) = 1$ & $\nu_\ell(d)$ \\
\hline
\hline
1 & 1 & 1 & 0 \\
3 & 2 & - & 1 \\
9 & 6 & - & 2 \\
\hline
2 & 1 & 1 & 0 \\
6 & 2 & - & 1 \\
18 & 6 & - & 2 \\
\hline
7 & 6 & 6 & 0 \\
21 & 12 & - & 1 \\
63 & 36 & - & 2 \\
\hline
14 & 6 & 6 & 0 \\
42 & 12 & - & 1 \\
126 & 36 & - & 2 \\
\hline
\end{tabular}
\caption{The cycle structure of $\varphi(x;3)$ over $\pp\ff_{127}$, as determined by Theorem \ref{th:maine graph}.} \label{fig:table G(3,127)}
\end{figure}
\end{itsapicture}

From now on, we assume that $\fr p$ is a prime of characteristic $p$ and norm $q$, where $q \equiv 1 \pmod \ell$. Our next goal is to understand the factorization of $\varphi^n(x;\ell)-t$ in $\ff_q[x]$. By our previous conversation, we know that we can identify all $\ell^n$ roots of $\varphi^n(x)-t \pmod{\fr p}$ by constructing the graph of $\varphi$ over a sufficiently large field, then tracing back along all paths of length $n$ that terminate at $t$. The graph will contain these paths by provided the graph contains vertices with sufficiently large preperiod, and by Theorem \ref{th:maine graph}, this is equivalent to finding $k$ such that $p^k-1$ has sufficiently large $\ell$-adic valuation.

Moreover, in order to keep track of the degrees of the irreducible factors of $\varphi^n(x;\ell)-t$ over $\ff_q[x]$, we assign to each vertex a \emph{$q$-weight}, which we define as
\begin{align*}
\wt_q(b) = [\ff_q(b)\colon \ff_q].
\end{align*}
Thus if $b$ is a root of $\varphi^n(x;\ell)-t$ modulo $\fr p$, then the factorization of $\varphi^n(x;\ell)-t$ over $\ff_q[x]$ contains an irreducible factor of degree $\wt_q(b)$.

\begin{lem} \label{lem:valuation}
Let $m$ be the minimal integer for which $\ell \mid p^m-1$. Then
\begin{align*}
\nu_\ell(p^k-1) = \begin{cases*}\nu_\ell(p^m-1) + \nu_\ell(k) & if $m \mid k$ \\ 0 & otherwise.\end{cases*}
\end{align*}
\end{lem}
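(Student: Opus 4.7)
The plan is to recognize the lemma as essentially the Lifting the Exponent (LTE) principle for odd primes. First, I would dispose of the case $m \nmid k$. By hypothesis $m$ is the multiplicative order of $p$ in $(\zz/\ell\zz)^\times$, so $p^k \equiv 1 \pmod{\ell}$ if and only if $m \mid k$. When $m \nmid k$, this forces $\ell \nmid p^k - 1$, giving $\nu_\ell(p^k - 1) = 0$ as claimed.

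Next, for the case $m \mid k$, I would write $k = mj$ and apply LTE with $a = p^m$ and $b = 1$. The hypotheses are met: $\ell$ is an odd prime (from the standing assumption of Section \ref{sec:primes}), $\ell \mid a - b = p^m - 1$ by the definition of $m$, and $\ell \nmid a,b$. LTE then yields
\[
\nu_\ell(p^k - 1) = \nu_\ell\bigl((p^m)^j - 1^j\bigr) = \nu_\ell(p^m - 1) + \nu_\ell(j).
\]
To match the form in the statement, I would observe that $m$ divides $\ell - 1$ (it is the order of an element of $(\zz/\ell\zz)^\times$), so $\gcd(m,\ell) = 1$ and $\nu_\ell(m) = 0$; combined with $k = mj$ this gives $\nu_\ell(j) = \nu_\ell(k)$, as required.

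If a self-contained argument is preferred in lieu of citing LTE as a black box, I would prove the required instance by induction on $\nu_\ell(j)$. The base case $\ell \nmid j$ is handled by the factorization $(p^m)^j - 1 = (p^m - 1)\bigl(1 + p^m + \cdots + p^{m(j-1)}\bigr)$, together with the observation that the second factor reduces to $j \pmod{\ell}$ since $p^m \equiv 1 \pmod{\ell}$, hence is a unit mod $\ell$. The inductive step reduces to showing $\nu_\ell(A^\ell - 1) = \nu_\ell(A - 1) + 1$ whenever $A \equiv 1 \pmod{\ell}$; writing $A = 1 + \ell^s u$ with $s \ge 1$ and $\ell \nmid u$ and expanding $A^\ell$ via the binomial theorem settles this. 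The hypothesis that $\ell$ is odd enters essentially only here, through the fact that $\binom{\ell}{2} = \ell(\ell-1)/2$ is divisible by $\ell$ (which fails for $\ell=2$). I do not anticipate any substantive obstacle; the lemma is a direct unpacking of a standard result, and its role in what follows is to quantify precisely when $\ell \mid \#\pp\ff_{q^k} - 1$, i.e., when the graph over $\pp\ff_{q^k}$ first contains preimages of sufficiently large preperiod.
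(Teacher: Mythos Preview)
Your proposal is correct and follows essentially the same approach as the paper: both dispose of the case $m \nmid k$ via the definition of multiplicative order, and for $m \mid k$ both reduce to the identity $\nu_\ell\bigl((p^m)^r - 1\bigr) = \nu_\ell(p^m-1) + \nu_\ell(r)$, then use $\gcd(m,\ell)=1$ to replace $\nu_\ell(r)$ by $\nu_\ell(k)$. The only difference is cosmetic: the paper writes out the geometric-series factorization $p^{rm}-1 = (p^m-1)\sum_{i=0}^{r-1} p^{mi}$ and asserts $\nu_\ell\bigl(\sum p^{mi}\bigr) = \nu_\ell(r)$ without further comment, whereas you name this step as LTE and supply a self-contained inductive proof---so your version is, if anything, more complete.
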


\begin{proof}
By definition, $m$ is the order of $p$ in $(\zz/\ell\zz)^\times$, hence $\ell \mid p^k-1$ if and only if $m \mid k$. If $m \mid k$, then $k = rm$ for some integer $r$, so we may write
\begin{align*}
p^k-1 = p^{rm}-1 = (p^m-1)\sum_{i=0}^{r-1} p^{mi}.
\end{align*}
This implies that
\begin{align*}
\nu_\ell(p^k-1) = \nu_\ell(p^m-1) + \nu_\ell\left(\sum_{i=0}^{r-1} p^{mi}\right) = \nu_\ell(p^m-1) + \nu_\ell(r) = \nu_\ell(p^m-1) + \nu_\ell(k),
\end{align*}
where the last equally follows from the fact that $\gcd(m,\ell)=1$, and so $\nu_\ell(r) = \nu_\ell(rm) = \nu_\ell(k)$.
\end{proof}

The take-away is that if we begin by considering the graph of $\varphi$ over $\pp\ff_q$ and we want to extend our graph by adding vertices with larger preperiods, then it suffices to work over a field whose degree over $\ff_q$ is a power of $\ell$. In fact, each time we increase the degree of the extension by $\ell$, the maximal preperiod in the graph is increased by one, thus we can establish a direct relationship be the weight of a vertex and its preperiod. Namely, suppose $a \in \pp\ff_q$ is maximally preperiodic, by which we mean 
\begin{align*}
\pper(a) = \max_{b \in \pp\ff_q}\{\pper(b)\}.
\end{align*} Then for any $b \in \varphi^{-n}(a)$, it follows by Lemma \ref{lem:valuation} that $\wt_q(b) = \ell^n$. In particular, the uniform growth of the graphs established by Theorem \ref{th:maine graph} and Lemma \ref{lem:valuation} means that the factorization of $\varphi^n(x)-t$ in $\ff_q[x]$ is completely determined by the preperiod of $\overline t$, where $\overline t$ denotes the reduction of $t$ modulo $\fr p$. We summarize this result as follows.

\begin{thm} \label{th:maine factorization} 
Let $M = \max_{b \in \pp\ff_q} \{ \pper(b)\}$, and let $\pper(t)$ denote the preperiod of $\overline t$ in $\pp\ff_q$. If $t \equiv \zeta^{\pm1} \pmod{\fr p}$, then $\varphi^n(x;\ell)-t \equiv (x-t)^{\ell^n} \pmod{\fr p}$. Otherwise,
\begin{enumerate}
\item if $n \le M - \pper(t)$, then $\varphi^n(x;\ell)-t$ splits completely into distinct linear factors;
\item if $n > M - \pper(t)$ and $\pper(t) \ge 1$, then $\varphi^n(x;\ell)-t$ factors into $\ell^{M - \pper(t)}$ distinct irreducible factors of degree $\ell^{n-M+\pper(t)}$;
\item if $n > M$ and $\pper(t) = 0$, then $\varphi^n(x;\ell)-t$ factors into $\sum_{i=1}^M \ell^i$ distinct linear factors, and $\ell^M$ distinct irreducible factors of degree $\ell^i$ for each $1 \le i \le n-M$.
\end{enumerate}
\end{thm}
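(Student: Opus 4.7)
The plan is to read the factorization of $r_n(x,t;\ell)\bmod\fr p$ (equivalently, the roots of $\varphi^n(x;\ell)-t$) directly off the action of $\varphi$ on $\overline{\ff_q}$, by means of the elementary identity $\beta_{\varphi(b)}=\beta_b^\ell$. This identity comes for free from the two formulas
\begin{align*}
\varphi(x)-\zeta = \frac{(\zeta\inv-\zeta)(x-\zeta)^\ell}{D(x)}, \qquad \varphi(x)-\zeta\inv = \frac{(\zeta\inv-\zeta)(x-\zeta\inv)^\ell}{D(x)}
\end{align*}
already established in the proof of Proposition \ref{prop:genrik} (where $D(x)=(x-\zeta)^\ell-(x-\zeta\inv)^\ell$); iterating gives $\beta_{\varphi^n(b)}=\beta_b^{\ell^n}$, so the preimages of $\overline t$ under $\varphi^n$ are in bijection with the $\ell^n$-th roots of $\alpha:=\beta_{\overline t}\in\overline{\ff_q}^\times$ via $b\mapsto\beta_b$.

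If $\overline t\equiv\zeta^{\pm1}\pmod{\fr p}$, then Lemma \ref{lem:rmodp} applies directly; the symmetry $\zeta\leftrightarrow\zeta\inv$ of $r_n$ visible in Proposition \ref{prop:genrik} handles both signs and yields the first assertion. Otherwise, I first verify separability of $r_n\bmod\fr p$: the hypothesis $\fr p\nmid(\disc r_1(x,t;\ell))\ring_K$ excludes the primes dividing $\ell$ and $\zeta-\zeta\inv$ in the discriminant formula \eqref{eq:disc}, and $\overline t\ne\zeta^{\pm 1}$ ensures $(t-\zeta)(t-\zeta\inv)\not\equiv 0\pmod{\fr p}$, so $r_n$ has $\ell^n$ distinct roots mod $\fr p$. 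The degree of the irreducible factor of $r_n$ over $\ff_q$ containing a root $b$ is therefore $\wt_q(b):=[\ff_q(b):\ff_q]$, and since $b\mapsto\beta_b$ restricts to a bijection $\pp\ff_{q^k}\setminus\{\zeta,\zeta\inv\}\to\ff_{q^k}^\times$, one has $b\in\pp\ff_{q^k}$ iff $\ord\beta_b\mid q^k-1$. Writing $\ord\alpha=\ell^{\pper(\overline t)}d$ with $\gcd(d,\ell)=1$, the condition $d\mid q-1$ holds automatically (since $\alpha\in\ff_q^\times$), so this reduces to $\nu_\ell(\ord\beta_b)\le\nu_\ell(q^k-1)$. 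Applying Lemma \ref{lem:valuation} and Proposition \ref{prop:orbit} yields the closed form $\wt_q(b)=\ell^{\max(0,\pper(b)-M)}$.

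The entire factorization now reduces to tallying the orders of the $\ell^n$-th roots of $\alpha$. When $\pper(\overline t)\ge 1$, a short computation in the cyclic group $\overline{\ff_q}^\times$ shows every $\ell^n$-th root $\beta$ of $\alpha$ satisfies $\ord\beta=\ell^{\pper(\overline t)+n}d$, so all $\ell^n$ preimages share a common preperiod $\pper(\overline t)+n$ and a common weight $\ell^{\max(0,n-M+\pper(\overline t))}$, from which parts (1) and (2) drop out by a single division. When $\pper(\overline t)=0$, I would fix any $\ell^n$-th root $\gamma_0$ of $\alpha$ of order $d$ (available by B\'ezout since $\gcd(d,\ell)=1$); the remaining $\ell^n$-th roots form the coset $\gamma_0\mu_{\ell^n}$ with $\ord(\gamma_0\xi)=d\cdot\ord\xi$, and the standard enumeration of elements of each order in $\mu_{\ell^n}$ tallies the preimages of each preperiod $j\in\{0,1,\ldots,n\}$, which I then group into Frobenius orbits of size $\ell^{\max(0,j-M)}$ to read off (3). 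The main obstacle is this last bookkeeping step in the periodic case: the $\ell^n$ preimages split across $n+1$ different preperiod classes, and one must verify that all preimages of a given preperiod $j>M$ have the uniform weight $\ell^{j-M}$---which ultimately comes down to the fact that every such preimage shares the same prime-to-$\ell$ order $d\mid q-1$, so that the weight is governed purely by $\nu_\ell$ via Lemma \ref{lem:valuation}.
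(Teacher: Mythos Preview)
Your argument is correct and is essentially the paper's own: both identify the roots of $r_n\bmod\fr p$ with the $\ell^n$-th roots of $\beta_{\overline t}$ via the bijection $b\mapsto\beta_b$, then invoke Proposition~\ref{prop:orbit} and Lemma~\ref{lem:valuation} to convert orders into preperiods and preperiods into $q$-weights $\wt_q(b)=\ell^{\max(0,\pper(b)-M)}$; the paper simply phrases this in graph language and leaves the final tally implicit.

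One remark on the bookkeeping in case~(3): your count will produce $1+\sum_{j=1}^{M}(\ell-1)\ell^{j-1}=\ell^M$ linear factors and, for each $1\le i\le n-M$, exactly $(\ell-1)\ell^{M+i-1}/\ell^{i}=(\ell-1)\ell^{M-1}$ irreducible factors of degree $\ell^i$. These differ from the numbers printed in the statement, which are misprints (as written they sum to $\frac{\ell(\ell^n-1)}{\ell-1}\ne\ell^n$); your computation yields the correct values.
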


Assuming that $\fr p \nmid (\disc r_n(x,t;\ell))\ring_K$, Theorem \ref{th:maine index} follows immediately from Theorem \ref{th:maine factorization} and an application of Dedekind's theorem on the decomposition of primes \cite[Theorem 2.3.9]{c00}. Namely, the irreducible factors of $\varphi^n(x;\ell)-t$ in $\ff_q[x]$ and their degrees are in a one-to-one correspondence with the prime ideals in $K_n$ lying over $\fr p$ and their inertial degrees, respectively.

We also see, as a consequence of Theorem \ref{th:maine factorization} part (2), that if $\pper(t) = M$, then $\varphi^n(x;\ell)-t$, and hence $r_n(x,t;\ell)$, is irreducible for every $n \ge 1$. Thus the graphs give us a method for identifying values $t \in \ring_K$ that satisfy this irreducibility criterion, i.e. $r_n(x,t;\ell)$ is irreducible for each $n \ge 1$. Specifically, by Dirichlet's theorem on arithmetic progressions, we can always find a prime $\fr p \subset \ring_K$ of norm $q$ congruent to 1 modulo $\ell$. For any of these primes, construct the graph of $\varphi(x;\ell)$ over $\pp\ff_q$, then every maximally preperiodic element in the graph gives an equivalence class of values modulo $\fr p$ which yield irreducible iterates.

\begin{example} \label{ex:G(5,31)}
Fix a primitive 5-th root of unity $\zeta_5$. By cyclotomic reciprocity, $\qq(\zeta_5^+)$ contains two primes of norm 31, which we represent by $\fr p_1 = (\zeta_5^+-18)$ and $\fr p_2 = (\zeta_5^+-12)$. By Theorem \ref{th:maine graph} the structure of the graph of $\varphi(x;5)$ over $\pp\ff_{31}$ is determined by the divisors of 30; see Figure \ref{fig:table g(31)}.

\begin{itsapicture}
\begin{figure}[!ht]
\centering
\begin{tabular}{|c|c|c|c|}
\hline
Divisor of 30 & Number of elements & Period & Preperiod \\
$d$ & $\phi(d)$ & $\ord_d(\ell)$ if $\gcd(\ell,d) = 1$ & $\nu_\ell(d)$ \\
\hline
\hline
1 & 1 & 1 & 0 \\
5 & 4 & - & 1 \\
\hline
2 & 1 & 1 & 0 \\
10 & 4 & - & 1 \\
\hline
3 & 2 & 2 & 0 \\
15 & 8 & - & 1 \\
\hline
6 & 2 & 2 & 0 \\
30 & 8 & - & 1 \\
\hline
\end{tabular}
\caption{The graph of $\varphi(x;5)$ over $\pp\ff_{31}$ is determined by the divisors of 30.} \label{fig:table g(31)}
\end{figure}
\end{itsapicture}

Although the structures of the graphs are identical, we remind the reader that the decomposition of $\fr p_i$ is determined by $t \bmod{\fr p_i}$, and so the behavior of the primes may differ. For each prime, we can determine $\zeta_5 \bmod{\fr p_i}$ by considering the factorization of $x^2-\zeta_5^+x+1$ in $\ff_{31}[x]$. Here, $x^2-\zeta_5^+x + 1 \equiv (x-2)(x+15) \pmod{\fr p_1}$, and $x^2-\zeta_5^+x + 1 \equiv (x-4)(x-8) \pmod{\fr p_2}$. The graphs of $\varphi(x;5)$ are shown in Figures \ref{fig:gp1} and \ref{fig:gp2}, respectively. We see, for example, that if $t \in \zz$ and $t \equiv 10 \pmod{31}$, then $\fr p_1$ and $\fr p_2$ are totally inert in any tower specialized at $t$. On the other hand, if $t \equiv 14 \pmod{31}$, then $\fr p_2$ is totally inert, but $\fr p_1$ is not.

\begin{itsapicture}
\begin{figure}[!ht]
\centering
\begin{tikzpicture}[>=latex]
\draw (-3,0) circle (12pt) node[circle,inner sep=3.5mm] (2){} node {$2$};
\draw[->] (2) edge[loop left] (2);
\draw (-3,-4) circle (12pt) node[circle,inner sep=3.5mm] (-15) {} node{$-15$};
\draw[->] (-15) edge[loop left] (-15);
\draw (0,0) circle (12pt) node[circle,inner sep=3.5mm] (infty){} node {$\infty$};
\draw[->] (infty) edge[loop left] (infty);
\foreach \x/\y in {0/0,-1/1,-12/2,-13/3} {
	\draw (0,0)++(22.5-90+45*\y:1.5) circle (12pt) node[circle,inner sep=3.5mm] (\x){} node {$\x$};
	\draw[->] (\x) edge (infty);
	}
\draw (0,-4) circle (12pt) node[circle,inner sep=3.5mm] (9){} node {$9$};
\draw[->] (9) edge[loop left] (9);
\foreach \x/\y in {-14/0,1/1,7/2,11/3} {
	\draw (0,-4)++(22.5-90+45*\y:1.5) circle (12pt) node[circle,inner sep=3.5mm] (\x){} node {$\x$};
	\draw[->] (\x) edge (9);
	}
\draw (4.5,0) circle (12pt) node[circle,inner sep=3.5mm] (4){} node {$4$}
	(6,0) circle (12pt) node[circle,inner sep=3.5mm] (14){} node {$14$};
\draw[->] (4) edge[bend left] (14) (14) edge[bend left] (4);
\foreach \x/\y in {-3/0,-2/1,3/2,8/3} {
	\draw (4.5,0)++(22.5+90+45*\y:1.5) circle (12pt) node[circle,inner sep=3.5mm] (\x){} node {$\x$};
	\draw[->] (\x) edge (4);
	}
\foreach \x/\y in {-11/0,-10/1,10/2,15/3} {
	\draw (6,0)++(22.5-90+45*\y:1.5) circle (12pt) node[circle,inner sep=3.5mm] (\x){} node {$\x$};
	\draw[->] (\x) edge (14);
	}
\draw (4.5,-4) circle (12pt) node[circle,inner sep=3.5mm] (-6){} node {$-6$}
	(6,-4) circle (12pt) node[circle,inner sep=3.5mm] (-7){} node {$-7$};
\draw[->] (-6) edge[bend left] (-7) (-7) edge[bend left] (-6);
\foreach \x/\y in {-5/0,-4/1,5/2,12/3} {
	\draw (4.5,-4)++(22.5+90+45*\y:1.5) circle (12pt) node[circle,inner sep=3.5mm] (\x){} node {$\x$};
	\draw[->] (\x) edge (-6);
	}
\foreach \x/\y in {-9/0,-8/1,6/2,13/3} {
	\draw (6,-4)++(22.5-90+45*\y:1.5) circle (12pt) node[circle,inner sep=3.5mm] (\x){} node {$\x$};
	\draw[->] (\x) edge (-7);
	}
\end{tikzpicture}
\caption{The graph of $\varphi(x;5)$ over $\qq(\zeta_5^+)/\fr p_1 \cong \pp\ff_{31}$.} \label{fig:gp1}
\end{figure}
\end{itsapicture}

\begin{itsapicture}
\begin{figure}[!ht]
\centering
\begin{tikzpicture}[>=latex]
\draw (-3,0) circle (12pt) node[circle,inner sep=3.5mm] (2){} node {$4$};
\draw[->] (2) edge[loop left] (2);
\draw (-3,-4) circle (12pt) node[circle,inner sep=3.5mm] (-15) {} node{$8$};
\draw[->] (-15) edge[loop left] (-15);
\draw (0,0) circle (12pt) node[circle,inner sep=3.5mm] (infty){} node {$\infty$};
\draw[->] (infty) edge[loop left] (infty);
\foreach \x/\y in {0/0,-1/1,12/2,13/3} {
	\draw (0,0)++(22.5-90+45*\y:1.5) circle (12pt) node[circle,inner sep=3.5mm] (\x){} node {$\x$};
	\draw[->] (\x) edge (infty);
	}
\draw (0,-4) circle (12pt) node[circle,inner sep=3.5mm] (9){} node {$6$};
\draw[->] (9) edge[loop left] (9);
\foreach \x/\y in {1/0,11/1,-14/2,-5/3} {
	\draw (0,-4)++(22.5-90+45*\y:1.5) circle (12pt) node[circle,inner sep=3.5mm] (\x){} node {$\x$};
	\draw[->] (\x) edge (9);
	}
\draw (4.5,0) circle (12pt) node[circle,inner sep=3.5mm] (4){} node {$3$}
	(6,0) circle (12pt) node[circle,inner sep=3.5mm] (14){} node {$9$};
\draw[->] (4) edge[bend left] (14) (14) edge[bend left] (4);
\foreach \x/\y in {5/0,-13/1,-10/2,-7/3} {
	\draw (4.5,0)++(22.5+90+45*\y:1.5) circle (12pt) node[circle,inner sep=3.5mm] (\x){} node {$\x$};
	\draw[->] (\x) edge (4);
	}
\foreach \x/\y in {7/0,-12/1,-9/2,-6/3} {
	\draw (6,0)++(22.5-90+45*\y:1.5) circle (12pt) node[circle,inner sep=3.5mm] (\x){} node {$\x$};
	\draw[->] (\x) edge (14);
	}
\draw (4.5,-4) circle (12pt) node[circle,inner sep=3.5mm] (-6){} node {$-3$}
	(6,-4) circle (12pt) node[circle,inner sep=3.5mm] (-7){} node {$15$};
\draw[->] (-6) edge[bend left] (-7) (-7) edge[bend left] (-6);
\foreach \x/\y in {10/0,14/1,-15/2,-8/3} {
	\draw (4.5,-4)++(22.5+90+45*\y:1.5) circle (12pt) node[circle,inner sep=3.5mm] (\x){} node {$\x$};
	\draw[->] (\x) edge (-6);
	}
\foreach \x/\y in {2/0,-11/1,-4/2,-2/3} {
	\draw (6,-4)++(22.5-90+45*\y:1.5) circle (12pt) node[circle,inner sep=3.5mm] (\x){} node {$\x$};
	\draw[->] (\x) edge (-7);
	}
\end{tikzpicture}
\caption{The graph of $\varphi(x;5)$ over $\qq(\zeta_5^+)/\fr p_2\cong \pp\ff_{31}$.} \label{fig:gp2}
\end{figure}
\end{itsapicture}

\end{example}

\bibliographystyle{abbrv}
\bibliography{DiscOfRikunaExt}
\end{document}